 \def\Spnr{Sp(d,\R)}
 \def\Gltwonr{GL(2d,\R)}
\newcommand{\tfs}{time-frequency shift}
\newtheorem{theorem}{Theorem}[section]
\newtheorem{corollary}[theorem]{Corollary}
\newtheorem{proposition}[theorem]{Proposition}
\newtheorem{definition}[theorem]{Definition}
\newtheorem{example}[theorem]{Example}
\newcommand{\beqa}{\begin{eqnarray*}}
\newcommand{\eeqa}{\end{eqnarray*}}
\newcommand{\field}[1]{\mathbb{#1}}
\newcommand{\bR}{\field{R}}        
\newcommand{\bN}{\field{N}}        
\newcommand{\bZ}{\field{Z}}        
\newcommand{\bC}{\field{C}}        
\def\la{\lambda}
\def\cF{\mathcal{F}}              
\def\cS{\mathcal{S}}
\def\cD{\mathcal{D}}
\def\cA{\mathcal{A}}
\def\cC{\mathcal{C}}
\def\a{\aleph}
\def\rd{\bR^d}
\def\rdd{{\bR^{2d}}}
\def\lrd{L^2(\rd)}
\def\lrdd{L^2(\rdd)}
\def\intrd{\int_{\rd}}
\def\intrdd{\int_{\rdd}}
\def\R{\right)}
\def\<{\left<}
\def\>{\right>}
\def\inv{^{-1}}
\def\mv1{M_v^1}
\def\phas{(x,\o )}
\def\mn{(m,n)}
\def\mn'{(m',n')}
\def\Spnr{Sp(d,\R)}
\def\o{\xi}
\def\a{\alpha}
\def\b{\beta}
\def\N{\mathbb{N}}
\def\R{\mathbb{R}}
\def\Ren{\mathbb{R}^d}
\def\sch{\mathcal{S}}
\def\Tau{\mathcal{T}}
\def\Sn2{S_{2}(L^{2}(\Ren))}
\def\S1{S_{1}(L^{2}(\Ren))}
\def\sig00{\sigma_{0,0}}
\def\la{\langle}
\def\ra{\rangle}
\begin{document}
\begin{abstract} We study the decay properties of Wigner kernels for Fourier integral operators of types I and II. The symbol spaces that allow a nice decay of these kernels are the Shubin classes $\Gamma^m(\rdd)$, with negative order $m$. The phases considered are the so-called tame ones, which appear in the Schr\"odinger propagators. The related canonical transformations are allowed to be nonlinear. It is the nonlinearity of these transformations that are the main obstacles for  nice kernel localizations when symbols are taken in the  H\"ormander's class $S^{0}_{0,0}(\rdd)$. Here we prove that Shubin classes  overcome this problem and allow a nice kernel localization, which improves with the decreasing of the order $m$.
\end{abstract}

\title[Wigner Analysis of FIOS]{Wigner Analysis of Fourier Integral Operators with symbols in the Shubin classes}

\author{Elena Cordero}
\address{Universit\`a di Torino, Dipartimento di Matematica, via Carlo Alberto 10, 10123 Torino, Italy}
\email{elena.cordero@unito.it}
\author{Gianluca Giacchi}
\address{Universit\`a di Bologna, Dipartimento di Matematica, Piazza di Porta San Donato 5, 40126 Bologna, Italy; University of Lausanne, Switzerland; HES-SO School of Engineering, Rue De L'Industrie 21, Sion, Switzerland; Centre Hospitalier Universitaire Vaudois, Switzerland}
\email{gianluca.giacchi2@unibo.it}
\author{Luigi Rodino}
\address{Universit\`a di Torino, Dipartimento di Matematica, via Carlo Alberto 10, 10123 Torino, Italy}
\email{luigi.rodino@unito.it}
\author{Mario Valenzano}
\address{Universit\`a di Torino, Dipartimento di Matematica, via Carlo Alberto 10, 10123 Torino, Italy}
\email{mario.valenzano@unito.it}
\thanks{}
\subjclass{Primary 35S30; Secondary 47G30}

\subjclass[2010]{35S05, 35S30,
47G30, 42C15}
\keywords{}
\maketitle

\section{Introduction}

The protagonist of this study is the Wigner distribution, one of the most popular time-frequency representations. It was introduced by  Wigner in  1932 \cite{Wigner} in the framework of Quantum Mechanics and  later applied to signal processing and time-frequency analysis  by Ville, Cohen and  many other authors, see, e.g., \cite{Cohen1,Cohen2,Ville48} and the textbooks \cite{Elena-book,Gosson-Wigner,book}.
\begin{definition} Consider $f,g\in\lrd$. 
	The cross-Wigner distribution $W(f,g)$ is
	\begin{equation}\label{CWD}
		W(f,g)(x,\xi)=\intrd f(x+\frac t2)\overline{g(x-\frac t2)}e^{-2\pi i t\xi}\,dt,\quad \phas\in\rdd.
	\end{equation} If $f=g$ we write $Wf:=W(f,f)$, the so-called Wigner distribution of $f$.
\end{definition}
 Wigner used the above representation to analyse the action of the  Schr\"odinger propagators.
We may extend  the Wigner approach in \cite{Wigner} as follows:  given a linear operator $T:\,\cS(\rd)\to \cS'(\rd)$, we consider  an operator $K$ on $\cS(\rdd)$ such that 
\begin{equation}\label{I3}
	W(Tf,Tg) = KW(f,g), \qquad f,g\in\cS(\rd).
\end{equation}
Its  integral kernel $k$ is called the \emph{Wigner kernel}  of $T$:
\begin{equation}\label{I4}
	W(Tf,Tg)(z) = \intrdd k(z,w) W(f,g)(w)\,dw,\quad z\in\rdd,\quad f,g\in\cS(\rd).
\end{equation}
As an elementary example of the effectiveness of the Wigner distribution, consider the Schr\"odinger propagator $T_\tau$, for a fixed time $\tau\in\bR$, of the free particle equation
$$T_\tau f(x)=\intrd e^{2\pi i( x\xi-\tau
	\xi^2)} {\widehat {f}}(\xi)d\xi,\quad x\in\rd.
$$
We have 
$$W(T_\tau f) (x,\xi)=Wf(x-\tau\xi,\xi)$$
with Wigner kernel 
\begin{equation}\label{L1}
	k=\delta_{z-\chi(w)},\quad w,z\in\rdd,
\end{equation}
where, if we write $w=(y,\eta)$, then $\chi(y,\eta)=(y+\tau\eta,\eta)$.  So we recapture exactly the inertial first Newton's law, from a probabilistic point of view. This striking result is due to the peculiar action of $W$ on the phase $\Phi(x,\xi)=x\xi-\tau\xi^2$. It generalizes to quadratic $\Phi\phas$, corresponding to quadratic Hamiltonians and linear symplectic map $\chi$ in \eqref{L1}, see for example \cite{Gos11}. 

Our aim is to extend this analysis to more general operators, namely Fourier integral operators of the form 
\begin{equation}\label{L2}
T_If(x)=\int_{\rd}e^{2\pi i\Phi(x,\xi)}\sigma(x,\xi)\hat f(\xi)d\xi, \qquad f\in\cS(\rd),
\end{equation}
with phase $\Phi$ and symbol $\sigma$ in suitable classes.
A preliminary step was presented in \cite{CRPartI2022}, with $T$ a pseudodifferential operator $\sigma(x,D)$, i.e., $\Phi\phas=x\xi$ in \eqref{L2}.  The case of a quadratic $\Phi$ and a general $\sigma$ was considered in \cite{CGRPartII2022} and in \cite{CRGFIO1}, where a generalization of \eqref{L1} was obtained by combining a linear symplectic map $\chi$ with the kernel of a pseudodifferential operator. 

In the present paper we focus on the case of nonlinear symplectic mappings $\chi$ corresponding to non-quadratic $\Phi$, which we call \emph{tame}, see Section $2$ below for their definition.

As a counterpart of \eqref{L1} we look for estimates of the type 
\begin{equation}\label{L3}
	|k(z,w) |\lesssim \frac{1}{\la z-\chi(w)\ra^{2N}},
\end{equation}
in the spirit of the estimates for Gabor kernels, which have been widely investigated in the literature, classical references are \cite{BC2021,sparsity2009,CGNRJMPA,CGNRJMP2014,charly06,GR}, see also \cite[Chapter 5]{Elena-book}.

There are two obstructions to the validity of \eqref{L3}. The first, evident from \eqref{L1} and also in the linear case, is that $k(z,w)$ is not point-wise defined for $z=\chi(w)$. This can be easily rephrased by a rescaling of regularity. The second obstruction is of deeper nature,  and it concerns only the nonlinear symplectic map $\chi$. In fact, it is well known that the Wigner transform may produce the so-called ghost frequencies. As observed in \cite{CGRPartII2022,CRcharModSp}, they are exactly preserved for  Schr\"odinger propagators  for linear $\chi$, i.e., quadratic $\Phi$, but this is not the case for nonlinear $\chi$. Namely, highly oscillating terms may appear in the  expression of the kernel $k(z,w)$ outside the graph of $z=\chi(w)$. 

As a first attempt for eliminating ghost frequencies and re-establishing the validity of \eqref{L3}, we shall consider in the sequel symbols $\sigma$ of low order in Shubin classes \cite{shubin}. Unluckily, this framework does not allow a direct application to Schr\"odinger equations, for which we address a future work, following a different smoothing procedure.

Let us outline the contents of the paper. Our starting point, in Section $3$, will be the following \emph{abstract} definition, in the lines of \cite{CGNRJMPA}.
\begin{definition}\label{C6T1.1}
Consider a \emph{tame}  symplectic diffeomorphism $\chi$ (cf. Definition \ref{def2.1} below).  For  $N\in \bN_+$, $N>d$,  we say that the operator $K$ in \eqref{I3} is in the class FIO($\chi$, $N$) if its Wigner kernel $k$ in \eqref{I4} satisfies, for $z=(z_1,z_2)$, $w=(w_1,w_2)\in \rdd$,
\begin{equation}\label{nucleoFIO}
	|k(z,w) |\lesssim \frac{1}{\la z-\chi(w)\ra^{2N}}.
\end{equation}
\end{definition}
Examples of operators which fall in the above class are pseudodifferential operators $\sigma(x,D)$ (the Kohn-Nirenberg form), defined by
\begin{equation}\label{C6kohnNirenberg}
	\sigma(x,D) f(x)=\intrd e^{2\pi i x \xi}\sigma(x,\xi)\hat{f}(\xi)\,d\xi,
\end{equation}
with a symbol $\sigma$ in the  Shubin classes $\Gamma^m(\rdd)$, $m<-2(d+N)$, whose Wigner kernel $k_\sigma$ satisfies
\begin{equation}\label{nucleoFIOpseudo}
	|k_\sigma(z,w) |\lesssim \frac{1}{\la z-w\ra^{2N}}.
\end{equation}
Here $\chi=I$, the identity mapping, cf. Section $2$ below.  More generally,  Fourier integral operators of type I (cf. \eqref{L2}) and II, having symbols in the same Shubin classes above and \emph{tame} canonical transformations, fall in the class above, as we shall show in Sections 4 and 5.

Let us state here the preliminary results of Section $3$, which are the core of this study and may be collected as follows.
\begin{theorem}[Properties of the class FIO($\chi$, $N$)] \label{tc1} \hspace*{5cm}
	
	(i) \textit{Boundedness}. $T\in  FIO(\chi, N)$ is bounded on $\lrd$.

	(ii)  \textit{Algebra Property}. If $T_i\in FIO(\chi_i,N)$, $i=1,2$, then $T_1 T_2\in
	FIO(\chi_1\chi_2,N)$.
	
	(iii)  If $T\in FIO(\chi, N)$ then its adjoint $T^{*}$ is in $FIO(\chi^{-1}, N)$.
\end{theorem}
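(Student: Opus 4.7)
The plan is to transport everything to the kernel level on $\rdd$ and exploit Moyal's identity $\|W(f,g)\|_{L^2(\rdd)}=\|f\|_2\|g\|_2$, so that properties of $T$ on $\lrd$ are read off from properties of the operator $K$ on $\lrdd$. For (i), Moyal's formula gives $\|Tf\|_2^2=\|W(Tf)\|_{L^2(\rdd)}=\|KWf\|_{L^2(\rdd)}$, so it suffices to check that $K$ is $L^2(\rdd)$-bounded. Since $2N>2d$, I would apply Schur's test to $|k(z,w)|\lesssim\la z-\chi(w)\ra^{-2N}$. The row sum $\sup_w\int|k(z,w)|\,dz$ is finite by translation invariance, and the column sum $\sup_z\int|k(z,w)|\,dw$ is handled by changing variables $u=\chi(w)$: tameness gives that $\chi$ is a symplectic diffeomorphism, hence $|\det D\chi|=1$, so the integral reduces to $\int\la z-u\ra^{-2N}du<\infty$.

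\textbf{Algebra property.} Iterating the defining relation, $W(T_1T_2f,T_1T_2g)=K_1K_2W(f,g)$, so the Wigner kernel of $T_1T_2$ is $k(z,w)=\int k_1(z,u)k_2(u,w)\,du$. Plugging in the decay estimates and changing variables $v=\chi_1(u)$ (Jacobian $1$),
\[
|k(z,w)|\lesssim\intrdd\la z-v\ra^{-2N}\,\la\chi_1^{-1}(v)-\chi_2(w)\ra^{-2N}\,dv.
\]
The Lipschitz bound $|\chi_1(A)-\chi_1(B)|\lesssim|A-B|$, available because tame maps have bounded first derivatives, yields
\[
\la v-\chi_1\chi_2(w)\ra=\la\chi_1(\chi_1^{-1}v)-\chi_1(\chi_2 w)\ra\lesssim\la\chi_1^{-1}(v)-\chi_2(w)\ra,
\]
so the second factor can be replaced by $\la v-\chi_1\chi_2(w)\ra^{-2N}$. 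The standard Peetre-type convolution inequality, valid since $2N>2d$, then gives
\[
\intrdd\la z-v\ra^{-2N}\la v-\chi_1\chi_2(w)\ra^{-2N}\,dv\lesssim\la z-\chi_1\chi_2(w)\ra^{-2N},
\]
which is the required estimate. One also checks that tame diffeomorphisms are stable under composition, so $\chi_1\chi_2$ is again an admissible canonical transformation.

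\textbf{Adjoint.} For (iii), combining Moyal's identity with the definition of $T^*$,
\[
\la KW(f,g),W(\vp,\psi)\ra=\la W(Tf,Tg),W(\vp,\psi)\ra=\la Tf,\vp\ra\overline{\la Tg,\psi\ra}=\la W(f,g),W(T^*\vp,T^*\psi)\ra.
\]
Since the set of cross-Wigner distributions $\{W(\vp,\psi):\vp,\psi\in\cS(\rd)\}$ is total in $L^2(\rdd)$, this forces $W(T^*\vp,T^*\psi)=K^*W(\vp,\psi)$. The Schwartz kernel of $K^*$ is $\overline{k(w,z)}$, so $|k^*(z,w)|\lesssim\la w-\chi(z)\ra^{-2N}$. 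Applying the Lipschitz inequality to $\chi^{-1}$ (which is also tame, hence Lipschitz), with $a=w$, $b=\chi(z)$,
\[
|z-\chi^{-1}(w)|=|\chi^{-1}(\chi(z))-\chi^{-1}(w)|\lesssim|w-\chi(z)|,
\]
gives $\la z-\chi^{-1}(w)\ra\lesssim\la w-\chi(z)\ra$ and therefore $|k^*(z,w)|\lesssim\la z-\chi^{-1}(w)\ra^{-2N}$, establishing $T^*\in FIO(\chi^{-1},N)$.

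\textbf{Main obstacle.} The substantive point in all three parts is exactly the transport of the Japanese bracket through the nonlinear map $\chi$: it is the global Lipschitz character of tame diffeomorphisms (bounded first derivatives of both $\chi$ and $\chi^{-1}$) together with the symplectic condition $|\det D\chi|=1$ that make both the composition estimate and the adjoint identification work. Boundedness is then an almost immediate consequence via Schur and Moyal.
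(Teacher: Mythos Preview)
Your proof is correct and follows the same overall strategy as the paper's proofs of Theorems~\ref{Pi}, \ref{Pii}, and \ref{adjoint}: transport to the kernel level via Moyal, then exploit the bi-Lipschitz nature of tame $\chi$ together with the convolution inequality $\la\cdot\ra^{-2N}\ast\la\cdot\ra^{-2N}\lesssim\la\cdot\ra^{-2N}$ for $N>d$. The differences are tactical. For (i) the paper rewrites the integral as a convolution composed with $\chi^{-1}$ and applies Young's inequality, whereas you use Schur's test; your use of $|\det D\chi|=1$ from symplecticity is slightly sharper than the paper's appeal to the bound on the Jacobian coming from \eqref{chistima}, though either suffices. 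For (iii) the paper invokes Theorem~\ref{3.3} (proved in \cite{CRGFIO1}) to identify the Wigner kernel of $T^*$ as $k(w,z)$, while you give a self-contained derivation via polarized Moyal and totality of cross-Wigner distributions in $L^2(\rdd)$, obtaining $\overline{k(w,z)}$; since only $|k(w,z)|$ enters the estimate, the two agree (and in fact the Wigner kernel is real by \eqref{nuclei}). Part (ii) is essentially identical in both treatments.
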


In Section $4$ we shall show the Fourier integral operators of type I in \eqref{L2}, having symbols in suitable Shubin classes $\Gamma^m(\rdd)$ and tame phase functions are in the class FIO($\chi$, $N$).  

The last Section $5$ is devoted to the $L^2$-adjoint of the FIO I in \eqref{L2}, which can be written explicitly in the form
$$
T_{II}f(x)=\int_{\rdd}e^{-2\pi i[\Phi(y,\xi)-x\xi]}\tau(y,\xi)f(y)dyd\xi, \qquad f\in\cS(\rd).
$$
Using tools from metaplectic Wigner distributions implemented in \cite{CRGFIO1,CGRPartII2022} we are able to compute the Wigner kernel of the FIOs II above and prove that, under suitable assumptions on their symbols, they belong to $FIO(\chi,N)$ as well.  We underline that these results are valid for the whole class of tame phase $\Phi$ defined in Subsection $2.3$, of particular interest is the case $\Phi$ non quadratic which gives rise to nonlinear symplectic transformations $\chi$, which were not treated in \cite{CRGFIO1}.

We believe that such theoretical study will pave the way to a better understanding of Wigner kernels for Fourier integral operators, with possible applications to dynamical versions of Hardy's uncertainty principles \cite{FM2021,Helge,Z1,Z2,ZH}, see also the recent contribution \cite{CGP2024}.
\section{Preliminaries}\label{sec:preliminaries}
\textbf{Notation.} We define $t^2=t\cdot t$,  $t\in\rd$, and, similarly,
$xy=x\cdot y$.  The space   $\sch(\Ren)$ is the Schwartz class and $\sch'(\Ren)$ its dual (the space of temperate distributions).   The brackets  $\la f,g\ra$ means the extension to $\sch' (\Ren)\times\sch (\Ren)$ of the inner product $\la f,g\ra=\int f(t){\overline {g(t)}}dt$ on $L^2(\Ren)$ (conjugate-linear in the second component).

A point in the phase space is denoted by
$z=(x,\xi)\in\rdd$. We call (\tfs )
the operators
\begin{equation}
	\label{eq:kh25}
	\pi (z)f(t) = e^{2\pi i \xi t} f(t-x), \, \quad t\in\rd.
\end{equation}
 $GL(d,\R)$ denotes the group of real invertible $d\times d$ matrices. 
\subsection{The symplectic group $Sp(d,\mathbb{R})$, metaplectic operators and Wigner distributions}
The standard symplectic matrix is
\begin{equation}\label{J}
	J=\begin{pmatrix} 0_{d\times d}&I_{d\times d}\\-I_{d\times d}&0_{d\times d}\end{pmatrix}.
\end{equation}
The symplectic group is defined by
\begin{equation}\label{defsymplectic}
	\Spnr=\left\{\cA\in\Gltwonr:\;\cA^T J\cA=J\right\},
\end{equation}
where  $\cA^T$ is the transpose of $\cA$. We have  $\det(\cA)=1$.\par
For $L\in GL(d,\bR)$ and $C\in Sym(2d,\bR)$, define:
\begin{equation}\label{defDLVC}
	\cD_L:=\begin{pmatrix}
		L^{-1} & 0_{d\times d}\\
		0_{d\times d} & L^T
	\end{pmatrix} \qquad \text{and} \qquad V_C:=\begin{pmatrix}
		I_{d\times d} & 0\\ C & I_{d\times d}
	\end{pmatrix}.
\end{equation}
The matrices $J$, $V_C$, and $\cD_L$  generate the group $Sp(d,\bR)$.\par

The Schr\"odinger representation $\rho$  of the Heisenberg group is given by $$\rho(x,\xi;\tau)=e^{2\pi i\tau}e^{-\pi i\xi x}\pi(x,\xi),$$ for all $x,\xi\in\rd$, $\tau\in\bR$.
For every $A\in Sp(d,\bR)$, $\rho_A(x,\xi;\tau):=\rho(A (x,\xi);\tau)$ defines another representation of the Heisenberg group that is equivalent to $\rho$, that is, there exists a unitary operator $\hat A:L^2(\rd)\to L^2(\rd)$ such that
\begin{equation}\label{muAdef}
	\hat A\rho(x,\xi;\tau)\hat A^{-1}=\rho(A(x,\xi);\tau), \qquad  x,\xi\in\rd, \ \tau\in\bR.
\end{equation}
This operator is not unique: if $\hat A'$ is another unitary transformation satisfying (\ref{muAdef}), then $\hat A'=c\hat A$, for some  $c\in\bC$, with $|c|=1$. The set $\{\hat A : A\in Sp(d,\bR)\}$ is a group under operator composition and has the metaplectic group $Mp(d,\bR)$ as subgroup. It is a realization of the two-fold cover of $Sp(d,\bR)$. The projection
\begin{equation}\label{piMp}
	\pi^{Mp}:Mp(d,\bR)\to Sp(d,\bR)
\end{equation} is a group homomorphism with kernel $\ker(\pi^{Mp})=\{-id_{{L^2}},id_{{L^2}}\}$.

Here, if $\hat A\in Mp(d,\bR)$, the matrix $A$ will  be the unique symplectic matrix satisfying $\pi^{Mp}(\hat A)=A$.
Some examples of metaplectic operators we will use in  the following are detailed below.
\begin{example}\label{es22} Consider the matrices $J$, $\cD_L$ and $V_C$  defined  in (\ref{J}) and (\ref{defDLVC}). Then, if we denoted by $\cF$ the Fourier transform,
	\begin{enumerate}
		\item[\it (i)] $\pi^{Mp}(\cF)=J$;
		\item[\it (ii)] if $\mathfrak{T}_L:=|\det(L)|^{1/2}\,f(L\cdot)$, then $\pi^{Mp}(\mathfrak{T}_L)=\cD_L$;
		\end{enumerate}
\end{example}
The relation between \tfs s and metaplectic operators is the following:
\begin{equation}\label{metap}
	\pi (\cA z) = c_\cA  \, \hat\cA  \pi (z) \hat\cA\inv  \quad  \forall
	z\in \rdd \, ,
\end{equation}
with a phase factor $c_\cA \in \bC , |c_{\cA }| =1$  (see, e.g., \cite{folland89,Gos11}).\\

\textbf{Metaplectic Wigner distributions.}
In the study of FIOs of type II we will use tools from the theory of metaplectic Wigner distributions. Here we list the basic elements for this study.
For $\hat\cA\in Mp(2d,\bR)$, the \emph{metaplectic Wigner distribution} associated to $\hat\cA$ is defined  as
\begin{equation}\label{WA}
	W_\cA(f,g)=\hat\cA(f\otimes\bar g),\quad f,g\in L^2(\rd).
\end{equation}
The most important time-frequency representations are metaplectic Wigner distributions.
The  $\tau$-Wigner distributions, $\tau\in\bR$, defined by
\begin{equation}\label{tauWigner}
	W_\tau(f,g)(x,\xi)=\int_{\rd} f(x+\tau t)\overline{g(x-(1-\tau)t)}e^{-2\pi i\xi  t}dt, \qquad  (x,\xi)\in\rdd,
\end{equation}
for $f,g\in L^2(\rd)$, are metaplectic Wigner distributions.
The case  $\tau=1/2$ is the cross-Wigner distribution, defined in \eqref{CWD}.
$\tau$-Wigner distributions are metaplectic Wigner distributions:
$$W_\tau(f,g)=\hat A_\tau(f\otimes\bar g),$$ with
\begin{equation}\label{Atau}
	A_\tau=\begin{pmatrix}
		(1-\tau)I_{d\times d} & \tau I_{d\times d} & 0_{d\times d} & 0_{d\times d}\\
		0_{d\times d} & 0_{d\times d} & \tau I_{d\times d} & -(1-\tau)I_{d\times d}\\
		0_{d\times d} & 0_{d\times d} & I_{d\times d} & I_{d\times d}\\
		-I_{d\times d} & I_{d\times d} & 0_{d\times d} & 0_{d\times d}
	\end{pmatrix}.
\end{equation}
In particular, we recapture the Wigner case when $\tau=1/2$:
\begin{equation}\label{A12}
	Wf=W_{1/2}(f,f)=\hat A_{1/2}(f\otimes\bar f), \quad f\in L^2(\rd).
\end{equation}
 $\hat A_{1/2}$ can be split into the product
\begin{equation}\label{A12prodotto}
	\hat A_{1/2}=\cF_2\mathfrak{T}_L,
\end{equation}
with
\[
L=\begin{pmatrix}I_{d\times d} & \frac{1}{2}I_{d\times d}\\ I_{d\times d} & -\frac{1}{2}I_{d\times d}\end{pmatrix}.
\]
Hence
\[
\hat A_{1/2}F(x,\xi)=\int_{\rd}F(x+t/2,x-t/2)e^{-2\pi i\xi t}dt, \qquad F\in \cS(\rdd),
\]
and $$\hat A_{1/2}^{-1}=\mathfrak{T}_{L^{-1}}\cF_2^{-1},$$ where
\[
L^{-1}=\begin{pmatrix} \frac{1}{2}I_{d\times d} & \frac{1}{2}I_{d\times d}\\
	I_{d\times d} & -I_{d\times d}
\end{pmatrix},
\]
so that
\begin{equation}\label{rem1}
	\hat A_{1/2}^{-1}F(x,\xi)=\int_{\rd}F(x/2+\xi/2,y)e^{2\pi i(x-\xi) y}dy, \qquad F\in\cS(\rdd).
\end{equation}
\subsection{Shubin and H\"{o}rmander classes \cite{shubin}, \cite{helffer84}, \cite{Elena-book}}
In our study we shall consider the following weight functions
\begin{equation}\label{vs} v_s(z)=\la z\ra^s=(1+|z|^2)^{\frac s 2},\quad s\in\R,
\end{equation}
\begin{definition}\label{shubinclass}
	Fix $m\in\bR$. The 
	shubin class $\Gamma^m(\rdd)$ is the set of functions $a\in\mathcal{C}^\infty(\rdd)$ satisfying  
	$$|\partial^\a_z a(z)|\leq C_\a v_{m-|\a |}(z),\quad z\in\rdd, \,\a \in \bZ^{2d}_+, 
	$$
for a suitable  constant $C_\a >0$,	where  $v_s(z)=\la z\ra^s$ is defined in \eqref{vs}.
\end{definition}

The H\"{o}rmander class $S^0_{0,0}(\rdd)$, consists of smooth functions  $\sigma$  on $\rdd$ such that 
\begin{equation}\label{I5}
	|\partial_x^\alpha \partial_\xi^\beta \sigma(x,\xi)|\leq c_{\alpha,\beta},\quad\alpha,\beta\in\N^d,\quad x,\xi\in\rd.
\end{equation}

\subsection{Tame phase functions and related canonical transformations}
\begin{definition}\label{def2.1} We follow the notation of \cite{CGNRJMPA,CRGFIO1}. A real  phase function  $\Phi(x,\eta)$  is named \emph{tame} if it satisfies the following
properties:\\
{\it  A1.} $\Phi\in \cC^{\infty}(\rdd)$;\\
{\it  A2.} For $z=\phas\in\rdd$,
\begin{equation}\label{phasedecay}
	|\partial_z^\a \Phi(z)|\leq
	C_\a,\quad |\a|\geq
	2;\end{equation}
{\it  A3.} There exists $\delta>0$:
\begin{equation}\label{detcond}
	|\det\,\partial^2_{x,\eta} \Phi(x,\o)|\geq \delta.
\end{equation}
\par
Solving the system
\begin{equation}\label{cantra} \left\{
	\begin{array}{l}
		y=\Phi_\eta(x,\eta),
		\\
		\xi=\Phi_x(x,\eta), \rule{0mm}{0.55cm}
	\end{array}
	\right.
\end{equation}
with respect to $(x,\xi)$, one obtains a
map $\chi$
 \begin{equation}\label{chi}
	(x,\xi)=\chi(y,\o),
\end{equation}
 with the following properties:\\
 
 \noindent {\it  A4.} $\chi:\rdd\to\rdd$ is a \emph{symplectomorphism} (smooth, invertible,  and
 preserves the symplectic form in $\rdd$, i.e., $dx\wedge d\xi= d
 y\wedge d\eta$.)
 \\
 {\it  A5.} For $z=(y,\eta)$,
 \begin{equation}\label{chistima}
 	|\partial_z^\a \chi(z)|\leq C_\a,\quad |\a|\geq 1;\end{equation}
 {\it A6}. There exists $\delta>0$: 
 \begin{equation}\label{detcond2}
 	|\det\,\frac{\partial x}{\partial y}(y,\eta)|\geq \delta\quad\mbox{for}\,\,
 	(x,\xi)=\chi(y,\eta).
 \end{equation}
\end{definition}
 Conversely,  as it was observed in \cite{CGNRJMPA}, to every transformation $\chi$ satisfying the three hypothesis above corresponds a tame phase $\Phi$, uniquely
 determined up to a constant. 
\subsection{Properties of the Wigner Kernel}
The Wigner kernel of a linear bounded  operator $T:\cS(\rd)\to\cS'(\rd)$ was introduced and studied in \cite{CRGFIO1}. We recall its definition and the properties useful for our framework.
\begin{definition}\label{defWKernel}
  The \emph{Wigner kernel} of  $T:\cS(\rd)\to\cS'(\rd)$  linear and bounded operator is the distribution $k\in\cS'(\bR^{4d})$  satisfying
	\begin{equation}\label{kerFormula}
		\la W(Tf,Tg),W(u,v)\ra=\la k,W(u,v)\otimes\overline{W(f,g)}\ra, \qquad f,g,u,v\in\cS(\rd).
	\end{equation}
\end{definition}
Observe that if $k\in\cS(\bR^{4d})$ the integral formula \eqref{I4} holds true.
The results of Theorem 3.3 and 4.3 in \cite{CRGFIO1} can be rephrased as follows:
\begin{theorem}\label{3.3}
Consider $T$ as above and let $k_T\in\cS'(\rdd)$ be its kernel. There exists a unique distribution $k\in\cS'(\bR^{4d})$ such that \eqref{kerFormula} holds. Hence, every bounded linear operator $T:\cS(\rd)\to\cS'(\rd)$ has a unique Wigner kernel. Furthermore,
	\begin{equation}\label{nuclei}
		k=\mathfrak{T}_pWk_T,
	\end{equation}
	with $\mathfrak{T}_pF(x,\xi,y,\eta)=F(x,y,\xi,-\eta)$.
	
	In particular,
	 if $T\in B(L^2(\rd))$ has Wigner kernel $k$, then its adjoint $T^*\in B(L^2(\rd))$ has Wigner kernel $\tilde k(z,w)= k(w,z)$, $z,w\in\rdd$.
\end{theorem}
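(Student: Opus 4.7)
The strategy is to exhibit $k$ explicitly as $\mathfrak{T}_p Wk_T$, verify \eqref{kerFormula} by a direct computation, and extract uniqueness from the density of Wigner distributions of Schwartz functions. The adjoint identity will then drop out by symmetry of the resulting formula.

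I would begin by invoking the Schwartz kernel theorem, which provides a unique $k_T\in\cS'(\rdd)$ representing $T$. Working formally on Schwartz data,
\[
W(Tf,Tg)(x,\xi)=\iiint k_T(x+t/2,y_1)\overline{k_T(x-t/2,y_2)}f(y_1)\overline{g(y_2)}e^{-2\pi i t\xi}\,dy_1\,dy_2\,dt.
\]
The change of variables $y_1=y+s/2$, $y_2=y-s/2$ combined with Fourier inversion in $s$ lets me replace $f(y+s/2)\overline{g(y-s/2)}$ by the partial inverse Fourier transform of $W(f,g)$ in the second argument. Reorganizing the integrand yields
\[
W(Tf,Tg)(x,\xi)=\iint W(f,g)(y,\eta)\Bigl[\iint k_T(x+t/2,y+s/2)\overline{k_T(x-t/2,y-s/2)}e^{-2\pi i(t\xi-s\eta)}\,dt\,ds\Bigr]\,dy\,d\eta,
\]
and the bracketed term is precisely $Wk_T(x,y,\xi,-\eta)=(\mathfrak{T}_p Wk_T)(x,\xi,y,\eta)$. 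This identifies the Wigner kernel on Schwartz data.

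For uniqueness, if two distributions $k_1,k_2\in\cS'(\bR^{4d})$ both satisfy \eqref{kerFormula}, then $\la k_1-k_2,W(u,v)\otimes\overline{W(f,g)}\ra=0$ for every $f,g,u,v\in\cS(\rd)$. From \eqref{A12prodotto} the map $(u,v)\mapsto W(u,v)=\hat A_{1/2}(u\otimes\bar v)$ factors through a metaplectic isomorphism of $\cS(\rdd)$, so finite linear combinations of $W(u,v)$ are dense in $\cS(\rdd)$; tensor-product density then makes $\{W(u,v)\otimes\overline{W(f,g)}\}$ total in $\cS(\bR^{4d})$, forcing $k_1=k_2$. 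The same mechanism legitimizes the existence step: the candidate $k=\mathfrak{T}_p Wk_T$ is a well-defined element of $\cS'(\bR^{4d})$ because $Wk_T$ extends to $\cS'(\rdd)$ via the metaplectic action on tempered distributions, and \eqref{kerFormula} can be checked by testing against $W(u,v)\otimes\overline{W(f,g)}$, so no pointwise product of $k_T$ with itself is ever required.

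For the adjoint claim, the Schwartz kernel of $T^*$ is $k_{T^*}(x,y)=\overline{k_T(y,x)}$. Plugging this into the definition of $W$ and changing variables $t\mapsto -t$, $s\mapsto -s$, then using that $Wk_T$ is real-valued, I obtain $Wk_{T^*}(x,y,\xi,\eta)=Wk_T(y,x,-\eta,-\xi)$. Composing with $\mathfrak{T}_p$ gives
\[
k_{T^*}(x,\xi,y,\eta)=Wk_{T^*}(x,y,\xi,-\eta)=Wk_T(y,x,\eta,-\xi)=k(y,\eta,x,\xi)=\tilde k(x,\xi,y,\eta),
\]
as claimed. The main technical hurdle is precisely to make the formal integral manipulations rigorous when $k_T$ is only a tempered distribution; moving to the weak formulation early, so that all integrations rest on Schwartz test functions and all appearances of $k_T$ are as a single distributional factor, resolves the difficulty cleanly.
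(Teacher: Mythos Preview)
The paper does not actually prove this theorem; it is stated as a rephrasing of Theorems~3.3 and~4.3 of \cite{CRGFIO1}, with no argument given here. So there is no in-paper proof to compare against.

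Your argument is correct and is in fact the natural one. The existence computation, identifying the bracketed integral as $Wk_T(x,y,\xi,-\eta)$, is exactly how \eqref{nuclei} arises; the uniqueness argument via the metaplectic factorization \eqref{A12prodotto} and density of tensors is the standard route; and the adjoint computation via $k_{T^*}(x,y)=\overline{k_T(y,x)}$ is right. One small remark: in the adjoint step the change of variables $(t,s)\mapsto(-t,-s)$ already gives $Wk_{T^*}(x,y,\xi,\eta)=Wk_T(y,x,-\eta,-\xi)$ directly, so invoking real-valuedness of $Wk_T$ is redundant (though not wrong). Your closing comment about passing to the weak formulation early is the correct way to make the manipulations rigorous for $k_T\in\cS'(\rdd)$.
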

\section{Properties of FIO($\chi, N$)}
This section is devoted to prove  Theorem \ref{tc1} in the introduction. This requires several steps, developed in what follows.
\begin{theorem}\label{Pi}
An operator $T\in  FIO(\chi,N)$,  is bounded on $\lrd$.
\end{theorem}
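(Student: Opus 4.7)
The plan is to reduce the boundedness of $T$ on $L^2(\rd)$ to the boundedness on $L^2(\rdd)$ of the integral operator $K$ defined by the Wigner kernel $k$ via \eqref{I4}, using the $L^2$-isometry property of the Wigner transform (Moyal's identity):
\begin{equation*}
  \|W(f,g)\|_{L^2(\rdd)}=\|f\|_{L^2(\rd)}\|g\|_{L^2(\rd)},\qquad f,g\in\cS(\rd).
\end{equation*}
Applying this with $f=g$ and then with $Tf$ in place of $f$, together with the intertwining $W(Tf,Tf)=K\,Wf$ from \eqref{I3}, I get for $f\in\cS(\rd)$
\begin{equation*}
  \|Tf\|_{L^2(\rd)}^{2}=\|W(Tf)\|_{L^2(\rdd)}=\|K\,Wf\|_{L^2(\rdd)}\leq \|K\|_{B(L^2(\rdd))}\|f\|_{L^2(\rd)}^{2},
\end{equation*}
which yields the claim by density of $\cS(\rd)$ in $L^2(\rd)$.

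The remaining step is to check that $K$ is bounded on $L^2(\rdd)$. For this I would invoke Schur's test applied to the pointwise bound \eqref{nucleoFIO}. The column estimate is direct: after the change of variable $v=z-\chi(w)$,
\begin{equation*}
  \sup_{w\in\rdd}\int_{\rdd}|k(z,w)|\,dz\;\lesssim\;\sup_{w\in\rdd}\int_{\rdd}\frac{dz}{\la z-\chi(w)\ra^{2N}}=\int_{\rdd}\frac{dv}{\la v\ra^{2N}}<\infty,
\end{equation*}
since $2N>2d$. For the row estimate I change variable $u=\chi(w)$ in $\rdd$; by Definition~\ref{def2.1} (property A4) $\chi$ is a symplectomorphism, hence volume-preserving, so $|\det d\chi^{-1}|\equiv 1$ and
\begin{equation*}
  \sup_{z\in\rdd}\int_{\rdd}|k(z,w)|\,dw\;\lesssim\;\sup_{z\in\rdd}\int_{\rdd}\frac{dw}{\la z-\chi(w)\ra^{2N}}=\int_{\rdd}\frac{du}{\la z-u\ra^{2N}}<\infty.
\end{equation*}
Both Schur constants are finite, so $K\in B(L^2(\rdd))$ and the chain of inequalities above gives $\|T\|_{B(L^2(\rd))}\leq \|K\|_{B(L^2(\rdd))}^{1/2}$.

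There is no serious obstacle here: the two ingredients are the isometric nature of $W$ on $L^2$ and the fact that the symplectic character of $\chi$ makes the off-diagonal decay in \eqref{nucleoFIO} integrable in each variable separately once $2N>2d$. The only points to state cleanly are that $K$ is initially defined only on the range $W(\cS(\rd)\otimes \overline{\cS(\rd)})$ but \eqref{nucleoFIO} together with Schur's test extends it to all of $L^2(\rdd)$, and that the assumption $N>d$ (as in Definition~\ref{C6T1.1}) is exactly what is needed to make the Schur integrals converge.
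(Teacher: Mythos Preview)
Your proof is correct and follows essentially the same strategy as the paper: reduce via Moyal's identity to the $L^2(\rdd)$-boundedness of the integral operator with kernel $k$, then exploit the decay \eqref{nucleoFIO} together with $N>d$. The only tactical difference is that you use Schur's test (with the volume-preserving property of the symplectomorphism $\chi$ for the row integral), whereas the paper instead invokes the bi-Lipschitz equivalence $\la z-\chi(w)\ra\asymp\la \chi^{-1}(z)-w\ra$ to rewrite the majorant as a convolution and applies Young's inequality; your route is arguably slightly cleaner, and your observation that $|\det d\chi|=1$ is sharper than the paper's use of the mere boundedness of the Jacobian from \eqref{chistima}.
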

\begin{proof}
	For $f\in \lrd$ we recall \cite[Chapter 1]{Elena-book} that the Wigner $Wf\in\lrdd$ and Moyal's identity $\|Wf\|_{L^2(\rdd)}=\|f\|^2_{L^2(\rd)}$.

	Using \eqref{I4},  Definition \ref{C6T1.1},   for any $f\in\lrd$,
	$$\|Tf\|^2_{L^2(\rd)}=\|W(Tf)\|_{L^2(\rdd)},
	$$
	and
	\begin{align*}
		\|W(Tf)\|_{L^2(\rdd)}&\lesssim \left\|\intrdd\frac{1}{\la z-\chi(w)\ra^{2N}}Wf(w)\,dw\right\|_{L^2(\rdd)}\\
		&\asymp \left\|\intrdd\frac{1}{\la \chi^{-1}(z)-w\ra^{2N}}Wf(w)\,dw\right\|_{L^2(\rdd)}\\
		&\lesssim \left\|\left(\frac{1}{\la\cdot\ra^{2N}}\ast Wf\right)(\chi^{-1}(z))\right\|_{L^2(\rdd)}\\
		&\lesssim \left\|\left(\frac{1}{\la\cdot\ra^{2N}}\ast Wf\right)(z)\right\|_{L^2(\rdd)}\\
		&\leq\left\|\frac{1}{\la\cdot\ra^{2N}}\right\|_{L^1(\rdd)}\|Wf\|_{L^2(\rdd)}\\
		&\leq C_N \|f\|^2_{L^2(\rd)},
	\end{align*}
	where in the last row we used Young's inequality (observe that $N>d$) and, in the last but one,   the change of variables $z'=\chi^{-1}(z)$ which, for any $F\in L^2(\rdd)$, 
	\begin{align*}
		\|F(\chi^{-1}\cdot)\|^2_{L^2(\rdd)}&=\int_{\rdd}|F(\chi^{-1}(z))|^2\,dz=\int_{\rdd}|F(z')|^2 \det |J\chi(z')|dz'\\
		&\leq C_N\|F\|^2_{L^2(\rdd)},
	\end{align*}
by \eqref{chistima}.
	Hence $\|Tf\|_{L^2(\rd)}\leq \sqrt{C_N} \|f\|_{L^2(\rd)}$, that is $T\in B(L^2(\rd))$.
\end{proof}	 
	
	\begin{theorem}\label{Pii}
		If $T_i\in FIO(\chi_i,N)$, $i=1,2$,   then $T_1 T_2\in
		FIO(\chi_1\chi_2,N)$.
	\end{theorem}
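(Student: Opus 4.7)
The plan is to compute the Wigner kernel of the composition $T_1 T_2$ and then control it via the decay assumptions on $k_1, k_2$. Applying the defining identity \eqref{I4} first to $T_1$ on the pair $(T_2 f, T_2 g)$ and then to $T_2$ on $(f,g)$, and swapping the order of integration (justified since, by the decay estimate, $\|k_2(w,\cdot)\|_{L^2(\rdd)}$ is a finite constant independent of $w$ and $W(f,g)\in L^2$), I would obtain that $T_1 T_2$, which is bounded on $L^2$ by Theorem \ref{Pi} and hence admits a unique Wigner kernel by Theorem \ref{3.3}, has Wigner kernel
\[
k(z,u)=\int_{\rdd} k_1(z,w)\,k_2(w,u)\,dw.
\]
Taking absolute values and inserting the pointwise bounds from Definition \ref{C6T1.1} reduces the proof to showing
\[
\int_{\rdd}\frac{dw}{\la z-\chi_1(w)\ra^{2N}\la w-\chi_2(u)\ra^{2N}}\;\lesssim\;\frac{1}{\la z-\chi_1\chi_2(u)\ra^{2N}}.
\]

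The core geometric step uses that, by \eqref{chistima} with $|\alpha|=1$, $\chi_1$ is Lipschitz with some constant $L$. I would split the domain of integration into
\[
R_1=\{w:|w-\chi_2(u)|\le \tfrac{1}{2L}|z-\chi_1\chi_2(u)|\},\qquad R_2=\rdd\setminus R_1.
\]
On $R_1$, the triangle inequality together with $|\chi_1(w)-\chi_1\chi_2(u)|\le L|w-\chi_2(u)|$ gives $|z-\chi_1(w)|\ge \tfrac{1}{2}|z-\chi_1\chi_2(u)|$, so one factors out $\la z-\chi_1\chi_2(u)\ra^{-2N}$ and is left with $\int\la w-\chi_2(u)\ra^{-2N}dw$, which is finite since $N>d$. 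On $R_2$, by definition $\la w-\chi_2(u)\ra^{-2N}\lesssim \la z-\chi_1\chi_2(u)\ra^{-2N}$; pulling that factor out, one is left with $\int\la z-\chi_1(w)\ra^{-2N}dw$, and the change of variables $w'=\chi_1(w)$ has unit Jacobian because $\chi_1$ is a symplectomorphism (its differential lies in $\Spnr$, so $|\det d\chi_1|\equiv 1$), reducing the integral to $\int\la z-w'\ra^{-2N}dw'<\infty$.

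The main obstacle is that a direct Peetre-type inequality of the form $\la z-\chi_1\chi_2(u)\ra\lesssim \la z-\chi_1(w)\ra\la w-\chi_2(u)\ra$ would consume the entire decay and leave a non-integrable residue in $w$. The two-region split is precisely what preserves the full $\la\cdot\ra^{-2N}$ decay in the target variable while retaining enough decay in $w$ for integrability, exploiting assumptions A4--A5 (symplectomorphism plus bounded first derivatives) and the quantitative bound $N>d$. Once the kernel bound is established, it matches \eqref{nucleoFIO} for the symplectomorphism $\chi_1\chi_2$, and therefore $T_1T_2\in FIO(\chi_1\chi_2,N)$.
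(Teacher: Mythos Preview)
Your proof is correct. The setup---identifying the Wigner kernel of the composition as $\int k_1(z,w)\,k_2(w,u)\,dw$ via two applications of \eqref{I4} and Fubini, then inserting the pointwise bounds---is exactly what the paper does. The difference lies in how the core estimate
\[
\int_{\rdd}\frac{dw}{\la z-\chi_1(w)\ra^{2N}\la w-\chi_2(u)\ra^{2N}}\lesssim\frac{1}{\la z-\chi_1\chi_2(u)\ra^{2N}}
\]
is obtained. The paper uses the bi-Lipschitz property of $\chi_1$ to replace $\la w-\chi_2(u)\ra$ by $\la\chi_1(w)-\chi_1\chi_2(u)\ra$, performs the substitution $w'=\chi_1(w)$ (with bounded Jacobian by \eqref{chistima}), and then invokes the standard weight-convolution inequality $\la\cdot\ra^{-2N}\ast\la\cdot\ra^{-2N}\lesssim\la\cdot\ra^{-2N}$ for $2N>2d$. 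Your near--far decomposition into $R_1$ and $R_2$ is a more self-contained argument that effectively proves this convolution bound inline; it requires only the one-sided Lipschitz estimate on $\chi_1$ together with the symplectic Jacobian identity $|\det d\chi_1|\equiv 1$ (sharper than the bounded-Jacobian argument the paper uses). The paper's route is shorter by outsourcing to a known lemma, while yours avoids that dependency at the cost of an explicit case split; both lead to the same conclusion.
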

	\begin{proof}
		Using the Wigner representation in \eqref{I4} we can write 
		\begin{equation}\label{I4iie}
			W(T_1T_2f,T_1T_2g)(z) = \intrdd k_{I,1}(z,w) W(T_2f,T_2g)(w)\,dw,
		\end{equation}
	where $k_{I,1}(z,w)$ is the Wigner kernel of the operator $T_1$, satisfying \eqref{nucleoFIO} with symplectic transformation $\chi_1$. Similarly, 
	\begin{equation*}\label{I4ii}
		W(T_2f,T_2g)(w) = \intrdd k_{I,2}(w,u) W(f,g)(u)\,du,
	\end{equation*}
with $k_{I,2}(w,u)$ being the Wigner kernel of $T_2$ satisfying \eqref{nucleoFIO} with symplectic transformation $\chi_2$. Substituting the expression of $W(T_2f,T_2g)(w)$ in \eqref{I4iie} we obtain
\begin{equation}\label{e0}
	W(T_1T_2f,T_1T_2g)(z) = \int_{\bR^{4d}} k_{I,1}(z,w)  k_{I,2}(w,u) W(f,g)(u)\,du\,dw,
\end{equation}
with $k_{I,i}(z,w)$ satisfying   \eqref{nucleoFIO}, $i=1,2$. 
 Interchanging the integrals in \eqref{e0} (observe that the assumptions of Fubini Theorem are satisfied) we can write
\begin{equation}\label{e00}
	W(T_1T_2f,T_1T_2g)(z) = \int_{\bR^{2d}}\left(\intrdd k_{I,1}(z,w)  k_{I,2}(w,u)\,dw\right) W(f,g)(u)\,du
\end{equation}
so that the Wigner kernel $k_{I,1 2}$ of the product $T_1T_2$ is given by
$$k_{I,1 2}(z,u):= \intrdd k_{I,1}(z,w)  k_{I,2}(w,u)\,dw.
$$
Using the Wigner kernel's estimates in \eqref{nucleoFIO},
we reckon
\begin{align*}
|k_{I,1 2}(z,u)|&\leq \intrdd| k_{I,1}(z,w)|  | k_{I,2}(w,u)|dw\\
&\lesssim\intrdd \frac{1}{\la z-\chi_1(w)\ra^{2N}\la w-\chi_2(u)\ra^{2N}} dw\\
&\asymp \intrdd \frac{1}{\la z-\chi_1(w)\ra^{2N}\la \chi_1(w)-\chi_1\chi_2(u)\ra^{2N}}dw\\
&\lesssim\intrdd  \frac{1}{\la z-\chi_1(w)\ra^{2N}\la \chi_1(w)-\chi_1\chi_2(u)\ra^{2N}}dw\\
&=\intrdd  \frac{1}{\la \chi_1(w)-z\ra^{2N}\la \chi_1\chi_2(u)-\chi_1(w)\ra^{2N}}dw\\
&=\intrdd  \frac{1}{\la w'-z\ra^{2N}\la \chi_1\chi_2(u)-w'\ra^{2N}}|\det J\chi_1^{-1}(w)| dw'\\
\end{align*}
where we used the change of variables
$\chi_1(w)=w'$ so that $dw=|\det J\chi_1^{-1}(w)| dw'$
since $|\det J\chi_1^{-1}(w)|\leq C$ by \eqref{chistima}, we obtain
\begin{align*}
	|k_{I,1 2}(z,u)|&\lesssim \intrdd  \frac{1}{\la w'-z\ra^{2N}\la \chi_1\chi_2(u)-w'\ra^{2N}}dw'\\
	&=\intrdd \frac{1}{\la v \ra^{2N}\la \chi_1\chi_2(u)-z-v \ra^{2N}}dv\\
&=(\la \cdot\ra^{-2N}\ast \la \cdot\ra^{-2N})(\chi_1\chi_2(u)-z)\\
&\lesssim \frac{1}{\la z-\chi_1\chi_2(u)\ra^{2N}}
\end{align*}
where in the last row we used the weight convolution property $\la\cdot\ra^s\ast \la\cdot\ra^s\lesssim \la\cdot\ra^s$ for $s<-2d$ (observe $N>d$).
Thus, we obtain the desired estimate
$$|k_{I,1 2}(z,u)|\lesssim  \frac{1}{\la z-\chi_1\chi_2(u)\ra^{2N}},$$
that is $T_1T_2\in FIO(\chi_1\chi_2,N)$.
\end{proof}
	\begin{theorem}\label{adjoint}
		If $T\in FIO(\chi,N)$,  
		then $T^{*}\in FIO(\chi^{-1},N)$.
	\end{theorem}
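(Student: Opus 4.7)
The plan is to obtain the claim as an almost immediate consequence of Theorem \ref{3.3}, combined with a Lipschitz-type equivalence between $\langle z - \chi^{-1}(w)\rangle$ and $\langle w - \chi(z)\rangle$ coming from assumption A5.

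First, I would invoke Theorem \ref{3.3} (which applies since $T \in FIO(\chi, N)$ is bounded on $L^2(\rd)$ by Theorem \ref{Pi}) to obtain the Wigner kernel of $T^*$ in the explicit form $\tilde{k}(z, w) = k(w, z)$, where $k$ is the Wigner kernel of $T$. Applying the FIO$(\chi, N)$ estimate \eqref{nucleoFIO} for $k$ with its arguments swapped then yields
\begin{equation*}
|\tilde{k}(z, w)| = |k(w, z)| \lesssim \frac{1}{\langle w - \chi(z)\rangle^{2N}}.
\end{equation*}

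The remaining step is to replace the right-hand side by the expression $\langle z - \chi^{-1}(w)\rangle^{-2N}$ required by Definition \ref{C6T1.1} with the symplectomorphism $\chi^{-1}$. For this, I would show the two-sided equivalence
\begin{equation*}
\langle w - \chi(z)\rangle \asymp \langle z - \chi^{-1}(w)\rangle, \qquad z, w \in \rdd.
\end{equation*}
Setting $w = \chi(u)$, the inequality $|w - \chi(z)| = |\chi(u) - \chi(z)| \leq C|u - z| = C|z - \chi^{-1}(w)|$ follows from the mean value theorem together with the boundedness of the Jacobian of $\chi$ given by \eqref{chistima} for $|\alpha| = 1$. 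The reverse inequality requires the same Lipschitz control for $\chi^{-1}$: since $\chi$ is a symplectomorphism (A4), its Jacobian $J\chi$ is pointwise a symplectic matrix, and a symplectic matrix $S$ satisfies $S^{-1} = -J S^T J$, so $\|S^{-1}\| = \|S\|$. Hence boundedness of $J\chi$ transfers automatically to boundedness of $J\chi^{-1} = (J\chi \circ \chi^{-1})^{-1}$, and $\chi^{-1}$ is globally Lipschitz. Combining both estimates gives the desired equivalence, hence
\begin{equation*}
|\tilde{k}(z, w)| \lesssim \frac{1}{\langle z - \chi^{-1}(w)\rangle^{2N}},
\end{equation*}
which means $T^* \in FIO(\chi^{-1}, N)$.

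I do not expect a genuine obstacle here: the only delicate point is the Lipschitz bound on $\chi^{-1}$, but this follows cleanly from the symplectic nature of $J\chi$ and assumption A5. One could alternatively note that $\chi^{-1}$ is itself a tame symplectomorphism (since the defining properties A4--A6 are symmetric under inversion, the last one via the symplectic identity applied to $\chi^{-1}$), which is a fact already implicit in the abstract formulation of Definition \ref{C6T1.1} and used elsewhere in the paper.
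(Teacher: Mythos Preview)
Your proposal is correct and follows essentially the same route as the paper: invoke Theorem~\ref{Pi} for $L^2$-boundedness, apply Theorem~\ref{3.3} to get $\tilde k(z,w)=k(w,z)$, and then use the bi-Lipschitz equivalence $\langle w-\chi(z)\rangle\asymp\langle z-\chi^{-1}(w)\rangle$ to conclude. The paper simply asserts that $\chi$ is bi-Lipschitz, whereas you supply an explicit justification via the symplectic identity $S^{-1}=-JS^TJ$ for the Jacobian; this is a nice addition but not a different strategy.
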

\begin{proof}
	Theorem \ref{Pi} gives that $T\in B(L^2(\rd))$. Let $k$ be integral kernel of $T$, then Theorem \ref{3.3} says that the adjoint $T^*\in B(L^2(\rd))$ has kernel $\tilde k$ given by
	$$\tilde k(z,w)={k(w,z)}.$$ This means it satisfies (cf.  \eqref{nucleoFIO})
	\begin{equation}\label{nucleoFIOagg}
		|\tilde k(z,w) |=| k(w,z) |\lesssim \frac{1}{\la w-\chi(z)\ra^{2N}}.
	\end{equation}
Since $\chi$ is a bi-Lipschitz transformation, $|w-\chi(z)|\asymp |z-\chi^{-1}(w)|$ so that 
$\la w-\chi(z)\ra^{2N}\asymp \la z-\chi^{-1}(w)\ra^{2N}$ and we obtain
\begin{equation}
	|\tilde k(z,w) |\lesssim \frac{1}{ \la z-\chi^{-1}(w)\ra^{2N}}.
\end{equation}
Hence $T^*\in  FIO(\chi^{-1},N)$, as desired.
\end{proof}
\section{FIOs of type I}
Here we focus on the analysis of  Wigner kernels for FIOs of type I:
\begin{equation}\label{tipo1}
	T_If(x)=\int_{\rd}e^{2\pi i\Phi(x,\xi)}\sigma(x,\xi)\hat f(\xi)d\xi, \qquad f\in\cS(\rd).
\end{equation}
Recall that he Schwartz' Kernel Theorem guarantees that every continuous linear
operator $T:\cS(\rd)\to\cS'(\rd)$ can be expressed in the form $T=T_I$,  with a given phase $\Phi(x,\xi)$ and  symbol $\sigma(x,\xi)$ in $\cS'(\rdd)$. 

These operators have been widely investigated in the framework of PDEs, both from a theoretical and a numerical point of view; the literature is so huge that we cannot report all the results but limit to a very partial list of them, cf. \cite{CD2007,wiener8,wiener9,fio1,fio3,B18,B36}.

If we assume that $T$ is a continuous
linear operator $\cS(\rd)\to\cS'(\rd)$ and $\chi$ satisfies conditions {\it
	A4}, {\it A5}, and {\it A6} in Definition \ref{def2.1} then $T=T_{I,\Phi_\chi,\sigma}$  (FIO
of type I), with symbol $\sigma$ and phase $\Phi_\chi$.

A first result related  to the Wigner kernel of a FIO I  was obtained in   \cite[Theorem 5.8]{CRGFIO1}. There, FIOs of type I with  symbols in the H\"ormander class $S^0_{0,0}(\rdd)$ were considered. Since $\Gamma^{m}(\rdd)\subset  S^0_{0,0}(\rdd)$ whenever $m\leq0$, we can   rephrased it  in our context as follows.
\begin{theorem}\label{teor3.6}
Let $T_I$ be a FIO of type I defined in \eqref{tipo1} with symbol $\sigma\in\Gamma^{m}(\rdd)$, $m<0$. For $f\in\cS(\rd)$, 
	\begin{equation}\label{K}
		K(W(f,g))(x,\xi)=W(T_If,T_Ig)(x,\xi)=\int_{\rdd}k_I(x,\xi,y,\eta)W(f,g)(y,\eta)dyd\eta,
	\end{equation}
	with  Wigner kernel $k_I$ given by
	\begin{equation}\label{KI}
		k_I(x,\xi,y,\eta)=\int_{\rdd}e^{2\pi i[\Phi_I(x,\eta,t,r)-(\xi t+ry)]} \sigma_I(x,\eta,t,r)dtdr,
	\end{equation}
	and, for $x,\eta,t,r\in\rd$,
	\begin{equation}\label{fii}
		\Phi_I(x,\eta,t,r)=\Phi(x+\frac{t}{2},\eta+\frac{r}{2})-\Phi(x-\frac{t}{2},\eta-\frac{r}{2}),
	\end{equation}
	\begin{equation}\label{sigmai}
		\sigma_I(x,\eta,t,r):=\sigma(x+\frac{t}{2},\eta+\frac{r}{2})\overline{\sigma(x-\frac{t}{2},\eta-\frac{r}{2})}.
	\end{equation}
\end{theorem}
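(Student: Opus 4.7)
My plan is to read the statement as a direct translation of \cite[Theorem~5.8]{CRGFIO1} into the Shubin setting: since $\Gamma^m(\rdd)\subset S^0_{0,0}(\rdd)$ whenever $m\le 0$, the hypotheses are a strict strengthening of those in that reference, so in principle the proof reduces to a citation. Nevertheless I would sketch the underlying computation, because it is short and it shows transparently where $\Phi_I$ and $\sigma_I$ come from.

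The first step is to substitute the defining formula \eqref{tipo1} for $T_If$ and $T_Ig$ into the cross-Wigner integral \eqref{CWD}, and to invert Fourier on $\hat f,\hat g$. This gives
\begin{align*}
W(T_If,T_Ig)(x,\xi)= \int & e^{2\pi i[\Phi(x+t/2,\xi_1)-\Phi(x-t/2,\xi_2)-t\xi-y_1\xi_1+y_2\xi_2]} \\
& \times \sigma(x+t/2,\xi_1)\overline{\sigma(x-t/2,\xi_2)}\,f(y_1)\overline{g(y_2)}\,dt\,d\xi_1\,d\xi_2\,dy_1\,dy_2.
\end{align*}
The second step is the sum/difference change of variables $\xi_1=\eta+r/2$, $\xi_2=\eta-r/2$ and $y_1=y+s/2$, $y_2=y-s/2$, which have unit Jacobian. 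The phase reorganises into $\Phi_I(x,\eta,t,r)-t\xi-ry-s\eta$ and the amplitude into $\sigma_I(x,\eta,t,r)\,f(y+s/2)\overline{g(y-s/2)}$. The $s$-integral is then precisely the definition of $W(f,g)(y,\eta)$, while the $(t,r)$-integral is $k_I(x,\xi,y,\eta)$ as in \eqref{KI}, which together yield \eqref{K}.

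The main obstacle is to rigorise the Fubini interchanges, since the $(t,r)$-integral defining $k_I$ is a genuine oscillatory integral: the decay afforded by $\sigma\in\Gamma^m(\rdd)$ with $m<0$ is only polynomial, and $\Phi$ is nonlinear. I would handle this via the tameness assumptions \emph{A2} and \emph{A3}: uniform bounds on $\partial^\alpha\Phi$ for $|\alpha|\ge 2$, combined with the lower bound $|\det\partial^2_{x,\eta}\Phi|\ge\delta$, allow repeated integration by parts in $(t,r)$ away from the stationary set, exactly as in the proof of \cite[Theorem~5.8]{CRGFIO1}. This promotes $k_I$ to a well-defined tempered distribution, smooth off the graph $\{z=\chi(w)\}$, and makes the Fubini manipulations legitimate on Schwartz data. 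Uniqueness in Theorem~\ref{3.3} then identifies $k_I$ with the Wigner kernel of $T_I$ and closes the proof.
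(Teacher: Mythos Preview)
Your proposal is correct and aligns with the paper's own treatment: the paper does not give a proof of this theorem at all, but simply states it as a rephrasing of \cite[Theorem~5.8]{CRGFIO1}, justified by the inclusion $\Gamma^{m}(\rdd)\subset S^0_{0,0}(\rdd)$ for $m\le 0$. You make exactly this citation and then, going beyond the paper, supply a correct sketch of the underlying substitution-and-change-of-variables computation that produces $\Phi_I$, $\sigma_I$, and the kernel formula \eqref{KI}.
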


We have now all the tools to estimate the Wigner kernel of $T_I$.

\begin{theorem}\label{T4.1}
	Consider $T_I$ the FIO of type I in \eqref{tipo1}. Fix $N\in \bN$, $N>d$, and assume that the symbol $\sigma\in \Gamma^{m}(\rdd)$, with $m<-2(d+N)$. Let $k_I$ be the associated Wigner kernel in \eqref{KI}. Then,
	\begin{equation}\label{nuclei-trasf}
		|k_I(x,\xi,y,\eta)|\lesssim\frac{\la (x,\eta)\ra^{2N+m}}{\la (x,\xi)-\chi(y,\eta)\ra^{2N}},\qquad x,\xi,y,\eta\in\rdd.
	\end{equation}  
\end{theorem}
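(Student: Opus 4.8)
The plan is to estimate the oscillatory integral \eqref{KI} by a standard non-stationary phase / integration-by-parts argument, exploiting the gain of decay coming from the negative Shubin order $m$. First I would write the phase of \eqref{KI} as $\Psi(x,\xi,y,\eta,t,r)=\Phi_I(x,\eta,t,r)-(\xi t+ry)$ and compute its gradient in the integration variables $(t,r)$. By the mean-value form of \eqref{fii}, namely
\[
\nabla_{(t,r)}\Phi_I(x,\eta,t,r)=\tfrac12\Big(\nabla_{(x,\eta)}\Phi(x+\tfrac t2,\eta+\tfrac r2)+\nabla_{(x,\eta)}\Phi(x-\tfrac t2,\eta-\tfrac r2)\Big),
\]
one recognises that, up to the symplectic change of variables encoded in \eqref{cantra}--\eqref{chi}, the critical point in $(t,r)$ occurs exactly when $(\xi,y)$ matches the value dictated by $\chi$ at $(y,\eta)$; more precisely $|\nabla_{(t,r)}\Psi|\gtrsim |(x,\xi)-\chi(y,\eta)|$ on a suitable region, using the bi-Lipschitz bounds \textit{A5}--\textit{A6} and the fact that the second derivatives of $\Phi$ are bounded by \textit{A2}. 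This is the geometric heart of the matter and the step I expect to be the main obstacle: one must show that the lower bound on the phase gradient is controlled from below by $\la (x,\xi)-\chi(y,\eta)\ra$ uniformly, which requires carefully splitting into the region where $|(t,r)|$ is comparable to $|(x,\xi)-\chi(y,\eta)|$ and its complement, and invoking \textit{A2} to guarantee that $\Phi_I$ is essentially linear in $(t,r)$ for bounded $(t,r)$ while its gradient cannot drift back.

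Second, I would introduce the standard first-order differential operator
\[
L=\frac{1-\tfrac{1}{2\pi i}\,\nabla_{(t,r)}\Psi\cdot\nabla_{(t,r)}}{1+|\nabla_{(t,r)}\Psi|^2},
\]
which satisfies $L\big(e^{2\pi i\Psi}\big)=e^{2\pi i\Psi}$, and integrate by parts $2N$ times (or as many times as needed), moving powers of $L^{T}$ onto the amplitude $\sigma_I$. Each application produces a factor $\la\nabla_{(t,r)}\Psi\ra^{-1}\gtrsim \la(x,\xi)-\chi(y,\eta)\ra^{-1}$ after the geometric step above, so after $2N$ steps we gain the denominator $\la (x,\xi)-\chi(y,\eta)\ra^{-2N}$ in \eqref{nuclei-trasf}. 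Here one must check that the derivatives hitting the quotient defining $L$ and the derivatives of $\Psi$ of order $\geq 2$ stay bounded, which is exactly \textit{A2}.

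Third, for the amplitude: from \eqref{sigmai} and $\sigma\in\Gamma^m(\rdd)$ one has, for every multi-index,
\[
|\partial^\alpha_{(t,r)}\sigma_I(x,\eta,t,r)|\lesssim \la (x,\eta)+\tfrac{(t,r)}2\ra^{m}\,\la (x,\eta)-\tfrac{(t,r)}2\ra^{m}\,\cdot(\text{lower-order bounded factors}),
\]
using the product rule and the Shubin estimates $|\partial^\beta\sigma(z)|\lesssim\la z\ra^{m-|\beta|}\le\la z\ra^{m}$ since $m<0$. The key elementary inequality is the Peetre-type bound $\la(x,\eta)+\tfrac{(t,r)}2\ra^{m}\la(x,\eta)-\tfrac{(t,r)}2\ra^{m}\lesssim \la(x,\eta)\ra^{2N+m}\,\la(t,r)\ra^{-2N}$ valid for $m<-2(d+N)$: indeed one of the two arguments has modulus $\gtrsim\max\{|(x,\eta)|,|(t,r)|\}$, so that factor alone contributes $\la\max\{|(x,\eta)|,|(t,r)|\}\ra^{m}$, which one splits off as $\la(x,\eta)\ra^{2N+m}\la(t,r)\ra^{-2N}$ times something bounded, while the other factor contributes at most $\la(x,\eta)\ra^{m}\lesssim 1$. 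After the integration by parts this leaves an integrand bounded by $\la (x,\eta)\ra^{2N+m}\,\la(x,\xi)-\chi(y,\eta)\ra^{-2N}\,\la(t,r)\ra^{-2N}$, and since $2N>2d$ the integral in $dt\,dr$ converges, yielding precisely \eqref{nuclei-trasf}. Finally I would remark that in the bounded region $|(t,r)|\lesssim 1$ where the non-stationary estimate degenerates, the crude bound $|k_I|\lesssim\int_{|(t,r)|\lesssim1}|\sigma_I|\,dt\,dr\lesssim\la(x,\eta)\ra^{2m}\lesssim\la(x,\eta)\ra^{2N+m}$ together with the trivial observation $\la(x,\xi)-\chi(y,\eta)\ra\lesssim\la(x,\eta)\ra\cdot(\text{const})$ — using \textit{A5} and that $(\xi,y)$ are determined through the kernel's support considerations — absorbs this piece into the claimed bound; making this last gluing clean is the secondary technical point to watch.
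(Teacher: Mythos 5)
Your overall strategy (non-stationary phase in $(t,r)$ applied directly to the nonlinear phase $\Psi=\Phi_I-(\xi t+ry)$) differs from the paper's, and the step you yourself flag as ``the geometric heart of the matter'' is where the argument breaks down. From \textit{A2} alone one only gets
\[
\nabla_{(t,r)}\Psi=\bigl(\Phi_x(x,\eta)-\xi,\ \Phi_\eta(x,\eta)-y\bigr)+E(t,r),\qquad |E(t,r)|\leq C\,|(t,r)|,
\]
so the lower bound $|\nabla_{(t,r)}\Psi|\gtrsim|(\Phi_x(x,\eta)-\xi,\Phi_\eta(x,\eta)-y)|\asymp|(x,\xi)-\chi(y,\eta)|$ holds only on the region $|(t,r)|\ll|(x,\xi)-\chi(y,\eta)|$; on the complement the gradient may genuinely vanish for nonlinear $\chi$ (this is exactly the ghost-frequency phenomenon the paper is addressing), and the degenerate region is therefore where $(t,r)$ is \emph{large}, not the ball $|(t,r)|\lesssim1$ invoked in your last paragraph. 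Moreover your patch for the degenerate region relies on $\la(x,\xi)-\chi(y,\eta)\ra\lesssim\la(x,\eta)\ra$ ``through the kernel's support considerations'', but there is no such constraint: in \eqref{KI} the variables $\xi$ and $y$ are free and enter only through the linear phase $\xi t+ry$, so this inequality is false, and the crude bound $\la(x,\eta)\ra^{2m}$ on that region carries no decay in $\la(x,\xi)-\chi(y,\eta)\ra$ at all. To salvage your route you would have to show that on $\{|(t,r)|\gtrsim|(x,\xi)-\chi(y,\eta)|\}$ the Shubin decay of $\sigma_I$ alone produces the full factor $\la(x,\xi)-\chi(y,\eta)\ra^{-2N}$ after integration without spoiling the numerator $\la(x,\eta)\ra^{2N+m}$; this requires a bookkeeping of exponents that your sketch does not supply.

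The paper avoids the splitting entirely: it expands $\Phi(x\pm t/2,\eta\pm r/2)$ to second order around $(x,\eta)$ with integral remainder, so that the first-order terms reproduce $t\Phi_x(x,\eta)+r\Phi_\eta(x,\eta)$ \emph{exactly} and the phase of \eqref{KI} becomes exactly linear in $u=(t,r)$, namely $e^{-2\pi i[t\cdot(\xi-\Phi_x(x,\eta))+r\cdot(y-\Phi_\eta(x,\eta))]}$, while the quadratic remainders $\Phi_{2}-\widetilde{\Phi}_{2}$ are absorbed into the amplitude $\tilde\sigma$. Integration by parts with $(1-\Delta_u)^N$ then yields the clean global factor $\la(\xi-\Phi_x(x,\eta),y-\Phi_\eta(x,\eta))\ra^{-2N}\asymp\la(x,\xi)-\chi(y,\eta)\ra^{-2N}$ with no stationary set to control; the price is that each $u$-derivative of $e^{2\pi i(\Phi_{2}-\widetilde{\Phi}_{2})}$ costs a factor growing like $\la u\ra^{|\a|}$, which is paid by the Shubin decay $\la z\pm u/2\ra^{m-|\b|}$ and the convolution estimate $v_{2N+m}\ast v_{2N+m}\lesssim v_{2N+m}$, valid because $2N+m<-2d$. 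This is precisely where the hypothesis $m<-2(d+N)$ enters; your amplitude estimates in the third step are essentially the correct ones, but they cannot compensate for the missing phase-gradient bound.
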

\begin{proof}
	Since $\Phi$ is smooth, we can expand $\Phi(x+\frac{t}{2},\eta+\frac{r}{2})$ and $\Phi(x-\frac{t}{2},\eta-\frac{r}{2})$
	into a Taylor series around
	$\phas$. Namely,
	\begin{equation}\label{E1}
		\Phi\left(x+\frac{t}{2},\eta+\frac{r}{2}\right)=\Phi(x,\eta)+\frac {t}2\Phi_x (x,\eta)+\frac {r}2 \Phi_\eta (x,\eta) +\Phi_{2}(x,\eta,t,r),
	\end{equation}
	where the remainder $\Phi_{2}$ is given by
	\begin{equation}
		\label{eq:c11}
		\Phi_{2}(x,\eta,t,r)=\sum_{|\a|=2}\int_0^1(1-\tau)\partial^\a
		\Phi((x,\eta)+\tau(t,r)/2)\,d\tau\frac{(t,r)^\a}{2^3\a!}.
	\end{equation}
	
	Similarly,

	\begin{equation}\label{E2}
		\Phi\left(x-\frac{t}{2},\eta-\frac{r}{2}\right)=\Phi(x,\eta)-\frac {t}2\Phi_x (x,\eta)-\frac {r}2 \Phi_\eta (x,\eta) +\widetilde{\Phi}_{2}(x,\eta,t,r),
	\end{equation}
	with
	$\widetilde{\Phi}_{2}$ defined as
	\begin{equation}
		\label{eq:c12}
		\widetilde{\Phi}_{2}(x,\eta,t,r)=\sum_{|\a|=2}\int_0^1(1-\tau)\partial^\a
		\Phi((x,\eta)-\tau(t,r)/2)\,d\tau\frac{(t,r)^\a}{2^3\a!}.
	\end{equation}
	Inserting the phase expansions above in \eqref{KI} we obtain
	\begin{equation}\label{KI-exp}
		k_I(x,\xi,y,\eta)=\int_{\rdd} e^{-2\pi i[t\cdot(\xi-\Phi_x(x,\eta))+ r\cdot(y-\Phi_\eta(x,\eta))]}\tilde{\sigma}(x,\eta,t,r)\,dtdr
	\end{equation}
	where
	$\tilde{\sigma}$ is defined as
	\begin{equation}\label{sigmatilde}
		\tilde{\sigma}(x,\eta,t,r)=e^{2\pi
			i[\Phi_{2}-\widetilde{\Phi}_{2}](x,\eta,t,r)} \sigma(x+\frac{t}{2},\eta+\frac{r}{2})\overline{\sigma(x-\frac{t}{2},\eta-\frac{r}{2})},
	\end{equation}
	
	For $N\in\bN$, $u=(t,r)\in\rdd$, using the identity:
	\begin{multline*}(1-\Delta_u)^Ne^{-2\pi i[(\xi-\Phi_x(x,\eta), y-\Phi_\eta(x,\eta))\cdot (t,r)]}\\
		=\la
		2\pi(\xi-\Phi_x(x,\eta), y-\Phi_\eta(x,\eta))\ra^{2N} e^{-2\pi i [(\xi-\Phi_x(x,\eta), y-\Phi_\eta(x,\eta))\cdot (t,r)]},
	\end{multline*}
	we integrate by parts in \eqref{KI-exp} and obtain
	\begin{align*}
		k_I(x,\xi,y,\eta)&=\frac1{\la
			2\pi(\xi-\Phi_x(x,\eta), y-\Phi_\eta(x,\eta))\ra^{2N}}\int_{\rdd} e^{-2\pi i [(\xi-\Phi_x(x,\eta), y-\Phi_\eta(x,\eta))\cdot (t,r)]} \\
		&\qquad\qquad\times (1-\Delta_u)^N \tilde{\sigma}(x,\eta,t,r)\,dtdr.\\
	\end{align*}
	The
	factor  $$(1-\Delta_u)^N \tilde{\sigma}(z,u),\quad z=(x,\eta),\,\,u=(t,r)$$
	can be expressed as
	$$e^{2\pi
		i[\Phi_{2}-\widetilde{\Phi}_{2}](z,u)}\sum_{|\a|+|\b|+|\gamma|\leq 2N}
	C_{\a,\b\gamma}p( \partial_u^{|\a|}(\Phi_{2}-\widetilde{\Phi}_{2})_z(u)
	(\partial_u^\b
	\sigma)(z+u/2)(\partial_u^\gamma\sigma)(z-u/2),$$
	where $p(\partial_u^{|\a|} (\Phi_{2}-\widetilde{\Phi}_{2})_z)(u)$ is a   polynomial  made  of derivatives w.r.t. $u$ of
	$\Phi_{2}-\widetilde{\Phi}_{2}$ of order at most $|\a|$. By assumption,
	$$|(\partial_u^\b
	\sigma)(z+u/2)(\partial_u^\gamma\sigma)(z-u/2)|\lesssim \la z+u/2\ra^{m-|\beta|} \la z-u/2\ra^{m-|\gamma|},$$
	which implies 
	\begin{align*}|(1-\Delta_u)^N \tilde{\sigma}(z,u)|&\lesssim \sum_{|\a|+|\b|+|\gamma|\leq 2N}
		\la u/2\ra^{|\a|}\la z-u/2\ra^{m-|\beta|} \la z+u/2\ra^{m-|\gamma|}\\
		&\lesssim  \sum_{|\b|+|\gamma|\leq 2N}
		\la u/2\ra^{2N-|\beta|-|\gamma|}\la z-u/2\ra^{m-|\beta|} \la z+u/2\ra^{m-|\gamma|}\\
		&\lesssim  C_N\la z-u/2\ra^{2N+m} \la z+u/2\ra^{2N+m}.
	\end{align*}
	Using the change of variables $u'=u/2-z$, $du=2^{2d}du'$,
	$$\intrdd \la z-u/2\ra^{2N+m} \la z+u/2\ra^{2N+m}du=2^{2d}\intrdd \la u'\ra^{2N+m}\la (-2z)-u'\ra^{2N+m}\lesssim \la z\ra^{2N+m}$$
	where, for $v_s=\la \cdot\ra^s$,  we used the weight convolution property $v_s\ast v_s\lesssim v_s$, for $s<2d$, cf. \cite[Lemma 11.1.1]{book}. Hence,
	\begin{align*}
		|k_I(x,\xi,y,\eta)|&\leq \frac1{\la
			2\pi(\xi-\Phi_x(x,\eta), y-\Phi_\eta(x,\eta))\ra^{2N}}\int_{\rdd} |(1-\Delta_u)^N \tilde{\sigma}(x,\eta,t,r)|\,dtdr\\
		&\lesssim \frac{\la z\ra^{2N+m}}{\la
			2\pi(\xi-\Phi_x(x,\eta), y-\Phi_\eta(x,\eta))\ra^{2N}}\\
		&\asymp  \frac{\la z\ra^{2N+m}}{\la
			\chi_1(y,\eta)-x, \chi_2(y,\eta)-\xi\ra^{2N}}.
	\end{align*}
	This gives the claim.
\end{proof}

As a consequence,
\begin{corollary}\label{cor42}
Under the assumptions of Theorem \ref{T4.1},   the estimate \eqref{nucleoFIO}
holds true, hence $T_I\in FIO(\chi,N)$.
\end{corollary}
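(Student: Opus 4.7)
The plan is very short since almost all the analytic work has already been carried out in Theorem \ref{T4.1}. What remains is a purely arithmetic observation about the exponent of the numerator weight.

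First, I would start from the pointwise bound obtained in Theorem \ref{T4.1},
\[
|k_I(x,\xi,y,\eta)|\lesssim \frac{\la (x,\eta)\ra^{2N+m}}{\la (x,\xi)-\chi(y,\eta)\ra^{2N}},
\]
and examine the exponent $2N+m$ of the numerator. By hypothesis $m<-2(d+N)$, hence $2N+m<-2d<0$. Since $\la\cdot\ra\geq 1$ on $\rdd$, a negative exponent gives $\la (x,\eta)\ra^{2N+m}\leq 1$ uniformly in $(x,\eta)\in\rdd$. (Actually any $m<-2N$ would already suffice for this last step; the stronger assumption $m<-2(d+N)$ was needed in the proof of Theorem \ref{T4.1} itself, via the $v_s\ast v_s\lesssim v_s$ weight-convolution estimate with $s<-2d$, in order to make the $u$-integral converge.)

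Combining these two facts, setting $z=(x,\xi)$ and $w=(y,\eta)$, I obtain
\[
|k_I(z,w)|\lesssim \frac{1}{\la z-\chi(w)\ra^{2N}},
\]
which is exactly the decay condition \eqref{nucleoFIO} of Definition \ref{C6T1.1}. Hence $T_I\in FIO(\chi,N)$.

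I do not anticipate any genuine obstacle here: the real work was packaged into Theorem \ref{T4.1} (Taylor expansion of the phase, integration by parts with $(1-\Delta_u)^N$, and control of $(1-\Delta_u)^N\tilde\sigma$ via the Shubin estimates together with the convolution inequality for the weights $v_s$). The role of the corollary is simply to record that the threshold $m<-2(d+N)$ is strong enough both to close the integration-by-parts argument and to absorb the spurious growth factor $\la(x,\eta)\ra^{2N+m}$ into a constant, producing the clean kernel estimate required by the abstract class $FIO(\chi,N)$.
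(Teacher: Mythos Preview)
Your proposal is correct and follows exactly the same route as the paper: observe that $m<-2(d+N)$ forces $2N+m<0$, hence $\la(x,\eta)\ra^{2N+m}\le 1$, and the bound of Theorem~\ref{T4.1} reduces to \eqref{nucleoFIO}. The additional remark you make about the role of the threshold $m<-2(d+N)$ in the proof of Theorem~\ref{T4.1} is accurate but extraneous to the corollary itself.
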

\begin{proof}
	It is an immediate consequence of Theorem \ref{T4.1}, since $2N+m<0$ so that $\la (x,\eta)\ra^{2N+m}\leq 1$, for every $x,\eta\in\rd$.
\end{proof}

\section{FIOs of Type II}
In this section we focus on the $L^2$-adjoint of a FIO of type $I$, which is a FIO of type II,	 written  formally as
\begin{equation}\label{FIOII}
T_{II}f(x)=\int_{\rdd}e^{-2\pi i[\Phi(y,\xi)-x\xi]}\tau(y,\xi)f(y)dyd\xi, \qquad f\in\cS(\rd).
\end{equation}
First, we shall work with symbols $\tau$ in the  H\"ormander class $S^0_{0,0}(\rdd)$, referring to \cite{AS78} for their $L^2$-boundedness.\par

\begin{proposition}\label{prop3}
Consider a FIO of type II as in \eqref{FIOII}, with $\tau\in S^0_{0,0}(\rdd)$.
	Then, for all $f,g\in\cS(\rd)$,
	\[
		(T_{II}f\otimes \bar g)(x_1,x_2)=T_2(f\otimes \bar g)(x_1,x_2),
	\]
	 $x=(x_1,x_2)\in\rdd$, where $T_2$ is the FIO of type II given by
	\[
		T_2F(x)=\int_{\bR^{4d}}e^{-2\pi i[\Phi_2(y,\xi)-x\xi]}\tau_2(y,\xi)F(y)dyd\xi, \qquad F\in\cS(\rdd),
	\]
 $y=(y_1,y_2)$, $\xi=(\xi_1,\xi_2)\in\rdd$, and $\Phi_2$ is the tame phase on $\bR^{4d}$ given by
	\begin{equation}\label{Pi2}
		\Phi_2(y,\xi)=\Phi(y_1,\xi_1)+y_2\xi_2;
	\end{equation}
whereas 	the symbol $\tau_2$ is in  $S^{0}_{0,0}(\bR^{4d})$ and  given by 
\begin{equation}\label{tau2}
\tau_2(y,\xi)=\tau(y_1,\xi_1)\otimes1(y_2,\xi_2).
	\end{equation}
\end{proposition}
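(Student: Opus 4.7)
The plan is to verify that the lifted phase $\Phi_2$ defined by \eqref{Pi2} is tame on $\bR^{4d}$ and that the lifted symbol $\tau_2$ defined by \eqref{tau2} lies in $S^{0}_{0,0}(\bR^{4d})$, and then to establish the identity $(T_{II}f\otimes\bar g)(x_1,x_2)=T_2(f\otimes\bar g)(x_1,x_2)$ by separating variables and invoking Fourier inversion on the $(y_2,\xi_2)$ block. Since all three ingredients decouple across the partition $(y_1,\xi_1)\sqcup(y_2,\xi_2)$, the computation is morally a direct product; the technical care lies entirely in handling the oscillatory integral.

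First I would check conditions A1--A3 of Definition \ref{def2.1} for $\Phi_2(y,\xi)=\Phi(y_1,\xi_1)+y_2\xi_2$. Smoothness is immediate. For A2 with $|\alpha|\geq 2$, the contribution coming from $\Phi(y_1,\xi_1)$ is bounded by A2 applied to $\Phi$, while the bilinear term $y_2\xi_2$ contributes only the constant mixed blocks $\partial_{y_2}\partial_{\xi_2}(y_2\xi_2)=I_{d\times d}$. For A3, the mixed Hessian $\partial^2_{(y,\xi)}\Phi_2$ is block-diagonal with diagonal blocks $\partial^2_{(y_1,\xi_1)}\Phi$ and $I_{d\times d}$, so its determinant has absolute value $|\det\partial^2_{(y_1,\xi_1)}\Phi|\geq\delta$. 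For $\tau_2=\tau\otimes 1$, any derivative of positive order in $(y_2,\xi_2)$ vanishes identically, while the surviving $(y_1,\xi_1)$-derivatives inherit the $S^{0}_{0,0}(\rdd)$-seminorm bounds of $\tau$; hence $\tau_2\in S^{0}_{0,0}(\bR^{4d})$.

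For the identity itself, I would write
\[
T_2(f\otimes\bar g)(x_1,x_2)=\int_{\bR^{4d}}e^{-2\pi i[\Phi(y_1,\xi_1)-x_1\xi_1]}e^{-2\pi i(y_2-x_2)\xi_2}\tau(y_1,\xi_1)f(y_1)\bar g(y_2)\,dy\,d\xi,
\]
and observe that the integrand factorises through the above partition. A Fubini-type rearrangement then isolates $T_{II}f(x_1)$ from the $(y_1,\xi_1)$ block and, from the $(y_2,\xi_2)$ block, produces $\int e^{2\pi i(x_2-y_2)\xi_2}\bar g(y_2)\,dy_2\,d\xi_2=\bar g(x_2)$ by Fourier inversion.

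The main obstacle will be that, since $\tau\in S^{0}_{0,0}$, neither side is given by an absolutely convergent integral, so both Fubini and the Fourier inversion step have to be justified in the oscillatory/distributional sense. I would handle this by a standard regularisation: introduce a cutoff $\chi_\varepsilon(\xi)=\chi(\varepsilon\xi)$ with $\chi\in\cS(\rdd)$, $\chi(0)=1$, perform the variable-separation identity for the regularised amplitudes $\chi_\varepsilon(\xi_1)\chi_\varepsilon(\xi_2)\tau_2(y,\xi)$ (for which Fubini and classical Fourier inversion apply directly), and then pass to the limit $\varepsilon\to 0^+$ using the $L^2$-boundedness of type II FIOs with $S^{0}_{0,0}$-symbols of Asada--Fujiwara \cite{AS78}, which guarantees convergence of both sides in $L^2(\rdd)$ and upgrades the identity from a regularised statement to the claimed one.
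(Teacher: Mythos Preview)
Your proof is correct and follows essentially the same approach as the paper: both verify A1--A3 for $\Phi_2$ via the block structure of its Hessian, check $\tau_2\in S^0_{0,0}(\bR^{4d})$ by the tensor structure, and establish the identity by Fourier inversion on the $(y_2,\xi_2)$ variables. The only differences are cosmetic: the paper computes in the direction $T_{II}f\otimes\bar g\to T_2(f\otimes\bar g)$ rather than the reverse, and treats the oscillatory integral formally, whereas you add an explicit $\varepsilon$-regularisation step --- a welcome bit of extra rigour that the paper omits.
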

\begin{proof}
	Let $f,g$ be in $\cS(\rd)$. Using  the Fourier  inversion formula on $g$:
	\[
		g(x_2)=\int_{\rdd}g(y_2)e^{2\pi i(x_2-y_2)\xi_2}dy_2d\xi_2
	\]
	we can write
	\[
		\begin{split}
			(T_{II}f\otimes\bar g)(x_1,x_2)&=\Big(\int_{\rdd}e^{-2\pi i[\Phi(y_1,\xi_1)-x_1\xi_1]}\tau(y_1,\xi_1)f(y_1)dy_1d\xi_1\Big)\overline{g(x_2)}\\
			&=\int_{\bR^{4d}}e^{-2\pi i[\Phi(y_1,\xi_1)+y_2\xi_2-(x_1\xi_1+x_2\xi_2)]}f(y_1)\overline{g(y_2)}\tau(y_1,\xi_1)dy_1dy_2d\xi_1d\xi_2.
		\end{split}
	\]
Observing that
	\[
		x_1\xi_1+x_2\xi_2=x \xi,
	\]
	we can write
	\[
		\Phi(y_1,\xi_1)+y_2\xi_2=\Phi_2(y_1,y_2,\xi_1,\xi_2),
	\]
	which is \eqref{Pi2}.  Note that $\Phi_2 \in\cC^{\infty}(\bR^{4d})$ and satisfies \eqref{phasedecay} and \eqref{detcond}, since 
	\begin{equation}\label{detPi2}
		\partial^2_{y,\xi}\Phi_2=\begin{pmatrix}
		\partial^2_{y_1,y_1}\Phi& 0_{d\times d} & \partial^2_{y_1,\xi_1}\Phi & 0_{d\times d} \\
		0_{d\times d} & 0_{d\times d} & 0_{d\times d} & I_{d\times d}\\
		\partial^2_{y_1,\xi_1}\Phi & 0_{d\times d} & \partial^2_{\xi_1,\xi_1}\Phi & 0_{d\times d}\\
		0_{d\times d} &I_{d\times d} & 0_{d\times d} & 0_{d\times d}
		\end{pmatrix}.
	\end{equation}
	This means that $\Phi_2$ is a tame phase function.
	
	Finally, since both $\tau$ and the function constantly equal to $1$ are in the H\"ormander class $S^{0}_{0,0}(\rdd)$, it immediately follows that $\tau_2$ belongs to $S^{0}_{0,0}(\bR^{4d})$.
\end{proof}

Using the same arguments as in the previous proposition, one can prove the issue below.
\begin{proposition}\label{prop4}
	Under the same assumptions of Proposition \ref{prop3}, for all $f,g\in\cS(\rd)$,
	\[
		(f\otimes \overline{T_{II}g})(x_1,x_2)=T_2'(f\otimes \bar g)(x_1,x_2),
	\]
	$x=(x_1,x_2)\in\rdd$, where the operator $T_2'$ is the FIO of type II:
	\[
		T_2'F(x)=\int_{\bR^{4d}}e^{-2\pi i[\Phi_2'(y,\xi)-x\xi]}\tau_2'(y,\xi)F(y)dyd\xi, \qquad F\in\cS(\rdd),
	\]
 $y=(y_1,y_2)$, $\xi=(\xi_1,\xi_2)\in\rdd$ and $\Phi_2'$ is the tame phase
	\begin{equation*}
		\Phi_2'(y,\xi)=-\Phi(y_2,-\xi_2)+y_1\xi_1,
	\end{equation*}
whereas the symbol $\tau_2\in S^{0}_{0,0}(\bR^{4d})$ is given by
$$\tau_2'(y,\xi)=1(y_1,\xi_1)\otimes\overline{\tau(y_2,-\xi_2)}.$$
\end{proposition}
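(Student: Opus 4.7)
The plan is to mirror the argument of Proposition \ref{prop3}, applied to the second factor and with the extra complication that complex conjugation must be absorbed. First I would invoke Fourier inversion to write
\[
f(x_1) = \int_{\rdd} f(y_1) e^{2\pi i (x_1-y_1)\xi_1}\, dy_1 d\xi_1,
\]
and then substitute the explicit integral formula for $\overline{T_{II}g(x_2)}$ coming from \eqref{FIOII}. Multiplying these representations produces a fourfold integral for $(f \otimes \overline{T_{II}g})(x_1,x_2)$ over $(y_1,y_2,\xi_1,\xi_2) \in \bR^{4d}$.

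The key observation is that the complex conjugation in $\overline{T_{II}g(x_2)}$ flips the sign of the phase, producing $e^{2\pi i[\Phi(y_2,\xi_2) - x_2\xi_2]}$, which is not yet in the FIO II normal form. A single change of variables $\xi_2 \mapsto -\xi_2$ (with unit Jacobian) converts this into $e^{-2\pi i[-\Phi(y_2,-\xi_2) - x_2\xi_2]}$ and replaces $\tau(y_2,\xi_2)$ by $\overline{\tau(y_2,-\xi_2)}$. Combining with the contribution $e^{-2\pi i[y_1\xi_1 - x_1\xi_1]}$ from Fourier inversion and grouping the linear terms via $x\xi = x_1\xi_1 + x_2\xi_2$, the resulting phase reads
\[
\Phi_2'(y,\xi) = -\Phi(y_2,-\xi_2) + y_1\xi_1,
\]
while the symbol is $\tau_2'(y,\xi) = 1(y_1,\xi_1) \otimes \overline{\tau(y_2,-\xi_2)}$, as claimed.

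The remaining task is to verify that $\Phi_2'$ is tame and that $\tau_2' \in S^0_{0,0}(\bR^{4d})$. Smoothness is immediate and the derivative bound \eqref{phasedecay} follows from the corresponding property of $\Phi$, since $y_1\xi_1$ contributes only through its order-two mixed derivative while $(y_2,\xi_2)\mapsto -\Phi(y_2,-\xi_2)$ is a sign-flipped composition of $\Phi$ with an involution. For the non-degeneracy condition \eqref{detcond}, the mixed Hessian $\partial^2_{y,\xi}\Phi_2'$ is block-diagonal with blocks $I_{d\times d}$ and $\partial^2_{y_2,\eta_2}\Phi(y_2,-\xi_2)$, so $|\det \partial^2_{y,\xi}\Phi_2'(y,\xi)| \geq \delta$ by A3 for $\Phi$. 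Finally, $\tau_2'$ is the tensor product of the constant $1$ (trivially in $S^0_{0,0}(\rdd)$) and $\overline{\tau(\cdot,-\cdot)}$ (in $S^0_{0,0}(\rdd)$ since $\tau$ is), hence belongs to $S^0_{0,0}(\bR^{4d})$. The only genuine departure from the proof of Proposition \ref{prop3} is the sign juggling forced by the complex conjugation, which is handled entirely by the single substitution $\xi_2 \mapsto -\xi_2$.
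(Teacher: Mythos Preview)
Your proposal is correct and follows exactly the approach the paper intends: the paper's proof consists solely of the sentence ``Using the same arguments as in the previous proposition, one can prove the issue below,'' and your argument is precisely that---Fourier inversion on the inert factor, the explicit FIO II formula on the other, with the substitution $\xi_2\mapsto -\xi_2$ to absorb the sign flip coming from conjugating the real phase. Your verification that $\Phi_2'$ is tame and $\tau_2'\in S^0_{0,0}(\bR^{4d})$ is more explicit than anything the paper writes, and is correct.
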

Next, we study the composition of the FIOs $T_2$ and $T_2'$.
\begin{proposition}\label{lemma21}
	Consider the FIOs $T_2$ and $T_2'$ defined in Propositions \ref{prop3} and \ref{prop4}. Then their product $T_2T_2'$ can be written as the following FIO of type II:
	\begin{equation}\label{T2T2'}
		T_2T_2'F(x)=\int_{\bR^{4d}}e^{-2\pi i{\mathbf \Phi}(y,\xi)-x\xi}
		 \Tau(y,\xi) F(y)\,dyd\xi,\quad x\in\rdd,
	\end{equation}
for every $F\in\cS(\rdd)$,
with tame phase on $\bR^{4d}$:
\begin{equation}\label{fiprodotto}
	{\mathbf \Phi}(y_1,y_2,\xi_1,\xi_2)=\Phi(y_1,\xi_1)-\Phi(y_2,-\xi_2)
\end{equation}
and symbol
\begin{equation}\label{tauprodotto}
	\mathbf{\Tau}(y_1,y_2,\xi_1,\xi_2)=\tau(y_1,\xi_1)\overline{\tau(y_2,-\xi_2)}\in S^0_{0,0}(\bR^{4d}).
\end{equation}
\end{proposition}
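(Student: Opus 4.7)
\medskip

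\noindent\textbf{Proof proposal.} The natural strategy is to substitute the explicit oscillatory integral defining $T_2'F$ into the expression for $T_2(T_2'F)$ and reduce the resulting $8d$-fold integral to the claimed $4d$-fold one by two Fourier-inversion identities. Unpacking the phases and symbols from Propositions \ref{prop3} and \ref{prop4}, the combined exponent in $T_2(T_2'F)(x)$ is
\[
-2\pi i\bigl[\Phi(y_1,\xi_1)+y_2\xi_2-x\xi-\Phi(z_2,-\eta_2)+z_1\eta_1-y_1\eta_1-y_2\eta_2\bigr],
\]
multiplied by the amplitude $\tau(y_1,\xi_1)\overline{\tau(z_2,-\eta_2)}F(z)$, with integration variables $y,\xi,z,\eta\in\rdd$.

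The key observation is that in this combined integral the identity-type factors in $\Phi_2$ and $\Phi_2'$ (namely $y_2\xi_2$ and $z_1\eta_1-y_1\eta_1$) decouple from the nontrivial $\Phi$-factors. First, the $y_2$ dependence is only $e^{-2\pi i y_2(\xi_2-\eta_2)}$, so integrating in $y_2$ yields the delta $\delta(\xi_2-\eta_2)$, which collapses the $\eta_2$-integral by setting $\eta_2=\xi_2$. After this reduction, the $\eta_1$ dependence is only $e^{-2\pi i(z_1-y_1)\eta_1}$, so integrating in $\eta_1$ yields $\delta(z_1-y_1)$, forcing $y_1=z_1$. What remains is exactly
\[
\int_{\bR^{4d}} e^{-2\pi i[\Phi(z_1,\xi_1)-\Phi(z_2,-\xi_2)-x\xi]}\,\tau(z_1,\xi_1)\overline{\tau(z_2,-\xi_2)}\,F(z)\,dz\,d\xi,
\]
which is the FIO of type II in \eqref{T2T2'} with phase \eqref{fiprodotto} and symbol \eqref{tauprodotto}.

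It then remains to verify that $\mathbf{\Phi}$ is a tame phase on $\bR^{4d}$ and that $\mathbf{T}\in S^0_{0,0}(\bR^{4d})$. Smoothness of $\mathbf{\Phi}$ and the decay \eqref{phasedecay} for $|\alpha|\geq 2$ follow immediately from the same properties of $\Phi$ in disjoint variables, while the mixed Hessian $\partial^2_{z,\xi}\mathbf{\Phi}$ is, after a block permutation analogous to \eqref{detPi2}, block-diagonal with blocks $\partial^2_{z_1,\xi_1}\Phi(z_1,\xi_1)$ and $-\partial^2_{z_2,\xi_2}\Phi(z_2,-\xi_2)$, so the non-degeneracy condition \eqref{detcond} carries over with the same constant $\delta$. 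The symbol $\mathbf{T}$ is a product of two bounded smooth factors from $S^0_{0,0}(\rdd)$ in disjoint variables, hence belongs to $S^0_{0,0}(\bR^{4d})$ by the Leibniz rule.

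The main obstacle is the rigorous justification of the Fubini/Fourier-inversion manipulations above, since the integrals are only oscillatory and not absolutely convergent. A clean way around this is to test the identity first on the dense subspace of $\cS(\rdd)$ spanned by tensor products $F=f\otimes\bar g$: by Propositions \ref{prop3} and \ref{prop4} one has $T_2T_2'(f\otimes\bar g)=T_{II}f\otimes\overline{T_{II}g}$, and computing this tensor product directly from the definition of $T_{II}$ (after the change of variables $\xi_2\mapsto -\xi_2$ inside the complex conjugate) yields precisely the right-hand side of \eqref{T2T2'} applied to $f\otimes\bar g$; the extension to arbitrary $F\in\cS(\rdd)$ then follows by continuity. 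Alternatively, one interprets the two inversions in the sense of tempered distributions, which is legitimate since $\tau\in S^0_{0,0}(\rdd)$.
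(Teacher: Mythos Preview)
Your proposal is correct and follows essentially the same route as the paper's proof: substitute the oscillatory integral for $T_2'F$ into $T_2$, obtain an $8d$-fold integral, and collapse four of the eight $\rd$-blocks by two Fourier-inversion/delta identities. The only cosmetic difference is the choice of which pair of variables to eliminate---the paper integrates in $(\eta_1,\xi_2)$ to produce $\delta_{y_1}(z_1)\,\delta_{x_2}(y_2)$, whereas you integrate in $(y_2,\eta_1)$ to produce $\delta(\xi_2-\eta_2)\,\delta(z_1-y_1)$; both lead to the same $4d$-fold integral after an obvious relabeling. Your added paragraph on the tameness of $\mathbf\Phi$ and your remark that the identity can first be checked on tensors $f\otimes\bar g$ (via Propositions~\ref{prop3}--\ref{prop4} and $T_2T_2'(f\otimes\bar g)=T_{II}f\otimes\overline{T_{II}g}$) are correct refinements that the paper leaves implicit.
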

\begin{proof}
	Let $F\in \cS(\rdd)$. We compute
	\begin{align*}
		T_2T_2'&F(x_1,x_2)=\int_{\bR^{4d}}e^{-2\pi i[\Phi(y_1,\xi_1)+y_2\xi_2-x_1\xi_1-x_2\xi_2]}\tau(y_1,\xi_1)T_2'F(y_1,y_2)dy_1dy_2d\xi_1d\xi_2\\
		&=\int_{\bR^{4d}}e^{-2\pi i[\Phi(y_1,\xi_1)+y_2\xi_2-x_1\xi_1-x_2\xi_2]}\tau(y_1,\xi_1)\int_{\bR^{4d}}e^{-2\pi i[-\Phi(z_2,-\eta_2)+z_1\eta_1-y_1\eta_1-y_2\eta_2]}\\
		& \qquad \times\,\overline{\tau(z_2,-\eta_2)}F(z_1,z_2)dz_1dz_2d\eta_1d\eta_2dy_1dy_2d\xi_1d\xi_2\\
		&=\int_{\bR^{8d}}e^{-2\pi i[\Phi(y_1,\xi_1)-\Phi(z_2,-\eta_2)+z_1\eta_1+y_2\xi_2-x_1\xi_1-x_2\xi_2-y_1\eta_1-y_2\eta_2]}\\
			& \qquad \times\,\tau(y_1,\xi_1)\overline{\tau(z_2,-\eta_2)}F(z_1,z_2)dzd\eta dy dx\xi.
	\end{align*}
	Using the well-known formulae
	\[
		\int_{\rdd}e^{-2\pi i\eta_1(z_1-y_1)}e^{-2\pi i\xi_2(y_2-x_2)}d\eta_1d\xi_2dz_1dy_2=\delta_{y_1}(z_1)dz_1\delta_{x_2}(y_2)dy_2,
	\]
we obtain
\begin{align*}
		T_2T_2'F(x_1,x_2)&=\int_{\bR^{4d}}e^{-2\pi i[\Phi(y_1,\xi_1)-\Phi(z_2,-\eta_2)-x_1\xi_1-x_2\eta_2]}\tau(y_1,\xi_1)\overline{\tau(z_2,-\eta_2)}\\
		&\qquad\quad\times\,F(y_1,z_2)dy_1dz_2d\xi_1d\eta_2,
\end{align*}
which is \eqref{T2T2'}. It is straightforward to check that the phase $	{\mathbf \Phi}$ in \eqref{fiprodotto} is tame, that is, it satisfies the properties of Definition \ref{def2.1}. 

Since $\tau\in S^{0}_{0,0}(\rd)$, then $\Tau\in  S^{0}_{0,0}(\bR^{4d})$.
\end{proof}

\begin{theorem}\label{thm22}
Consider the type II FIO  $T_{II}$ in \eqref{FIOII}, with symbol $\tau\in S^0_{0,0}(\rdd)$ and tame phase $\Phi$. Then  
	\[
	W(T_{II}f,T_{II}g)(x,\xi)=\int_{\rdd}k_{II}(x,\xi,s,z)W(f,g)(s,z)dsdz,\quad f,g\in\cS(\rd),
	\]
	where
	\begin{align}\label{KII}
	k_{II}(x,\xi,y,\eta)&=\int_{\rdd}e^{-2\pi i[\Phi(y+\frac r2,\xi+\frac t2)-\Phi(y-\frac r2,\xi-\frac t2)]}  e^{2\pi i(tx+r\eta)}\\
	&\qquad\times \, \tau(y+\frac r2,\xi+\frac t2)\overline{\tau(y-\frac r2,\xi-\frac t2)}\notag
	dt dr.
\end{align}
\end{theorem}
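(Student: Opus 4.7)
The plan is to combine Propositions \ref{prop3}, \ref{prop4}, and \ref{lemma21} with the metaplectic factorisation $W(f,g)=\hat A_{1/2}(f\otimes\bar g)$, and then extract the Wigner kernel by direct manipulation of the resulting oscillatory integral.

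First I would apply Proposition \ref{prop3} with $g$ replaced by $T_{II}g$, and afterwards Proposition \ref{prop4} to the inner tensor product, in order to factor
\[
T_{II}f\otimes\overline{T_{II}g}=T_2\bigl(f\otimes\overline{T_{II}g}\bigr)=T_2T_2'(f\otimes\bar g).
\]
Since $f\otimes\bar g=\hat A_{1/2}^{-1}W(f,g)$ and $W(T_{II}f,T_{II}g)=\hat A_{1/2}(T_{II}f\otimes\overline{T_{II}g})$, this gives the operator identity
\[
W(T_{II}f,T_{II}g)=\hat A_{1/2}\,T_2T_2'\,\hat A_{1/2}^{-1}\,W(f,g),
\]
so the task reduces to identifying the integral kernel of the operator on the right-hand side. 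I would then substitute the explicit representation \eqref{T2T2'} of $T_2T_2'$, the formula \eqref{rem1} for $\hat A_{1/2}^{-1}$, and the integral expression of $\hat A_{1/2}$ coming from the factorisation \eqref{A12prodotto}.

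The payoff comes from three changes of variables. First, $\xi_2\mapsto-\xi_2$ symmetrises $\mathbf{\Phi}$ and $\mathbf{\Tau}$ so that both copies of $\Phi$ and $\tau$ are evaluated at positive-sign second arguments. Second, the diagonal substitution $\xi_1=\zeta+u/2$, $\xi_2=\zeta-u/2$ turns the linear expression $(x+t/2)\xi_1-(x-t/2)\xi_2$ into the Wigner-type combination $xu+t\zeta$, decoupling the oscillation in $t$ from the rest. Third, the analogous change $y_1=s+r/2$, $y_2=s-r/2$ converts $F(y_1,y_2):=\hat A_{1/2}^{-1}W(f,g)(y_1,y_2)$ into the partial Fourier integral $\int_{\rd}W(f,g)(s,v)\,e^{2\pi irv}\,dv$, and simultaneously puts the phase and symbol into the Wigner-like form $\Phi(s\pm r/2,\zeta\pm u/2)$, $\tau(s\pm r/2,\zeta\pm u/2)$ demanded by \eqref{KII}.

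After these substitutions, integrating in $t$ produces a $\delta$-distribution that identifies $\zeta$ with the output frequency variable; what remains is an oscillatory integral over $(r,u)$ which, after relabelling $(s,v,u)\mapsto(y,\eta,t)$, is exactly formula \eqref{KII} for $k_{II}$. The main obstacle is bookkeeping --- juggling many variables, signs, and Jacobians without error --- together with the justification of the $\delta$-function step as an oscillatory-integral identity in the $S^{0}_{0,0}$ framework, which is routine but requires care. A further mild subtlety is that $T_{II}g$ need not lie in $\cS(\rd)$, so the initial tensor-product factorisation must be read in $\cS'$ (or justified by a density argument) before extracting the bona fide Wigner kernel.
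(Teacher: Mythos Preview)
Your proposal is correct and follows essentially the same route as the paper: the paper also writes $W(T_{II}f,T_{II}g)=\hat A_{1/2}\,T_2T_2'\,\hat A_{1/2}^{-1}W(f,g)$, inserts the explicit formulae for $T_2T_2'$, $\hat A_{1/2}$ and $\hat A_{1/2}^{-1}$, and then performs the same sequence of changes of variables (midpoint/offset in both the $y$- and $\eta$-variables) together with a $\delta$-function collapse coming from the $t$-integration. The only cosmetic difference is the order in which the substitutions and the $\delta$-step are carried out.
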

\begin{proof}
	Consider $f\in\cS(\rd)$ and use Proposition \ref{lemma21} and Remark \ref{rem1} to compute
	\begin{align*}
		W(T_{II}f,T_{II}g)&(x,\xi)=\hat A_{1/2}T_2T_2'\hat A_{1/2}^{-1}W(f,g)(x,\xi)\\
		&=\int_{\rd}(T_2T_2'\hat A_{1/2}^{-1}W(f,g))(x+\frac{t}{2},x-\frac{t}{2})e^{-2\pi i\xi t}dt\\
		&=\int_{\rd}\left(\int_{\bR^{4d}}e^{-2\pi i[\Phi(y_1,\eta_1)-\Phi(y_2,-\eta_2)-(x+t/2)\eta_1-(x-t/2)\eta_2]}\tau(y_1,\eta_1)\overline{\tau(y_2,-\eta_2)}\right.\\
		& \qquad \times\,\left. (\hat A_{1/2}^{-1}W(f,g))(y_1,y_2)dy_1dy_2d\eta_1d\eta_2\right)e^{-2\pi i\xi t}dt\\
		&=\int_{\rd}\left(\int_{\bR^{4d}}e^{-2\pi i[\Phi(y_1,\eta_1)-\Phi(y_2,-\eta_2)-(x+t/2)\eta_1-(x-t/2)\eta_2]}\tau(y_1,\eta_1)\overline{\tau(y_2,-\eta_2)}\right.\\
		&\qquad\times \left. \left(\int_{\rd}W(f,g)(y_1/2+y_2/2,z)e^{2\pi i(y_1-y_2)z}dz\right)dy_1dy_2d\eta_1d\eta_2\right)e^{-2\pi i\xi t}dt\\
		&=\int_{\bR^{6d}}e^{-2\pi i[\Phi(y_1,\eta_1)-\Phi(y_2,-\eta_2)-x\eta_1-\frac{t}{2}\eta_1-x\eta_2+\frac{t}{2}\eta_2-y_1z+y_2z+\xi t]}\tau(y_1,\eta_1)\overline{\tau(y_2,-\eta_2)}\\
		&\qquad \times \,W(f,g)(y_1/2+y_2/2,z)dzdy_1dy_2d\eta_1d\eta_2dt.
	\end{align*}
	The change of variables  $y_1/2+y_2/2=s$ gives
	\begin{align*}
		W(T_{II}f,T_{II}g)(x,\xi)&=2^d\int_{\bR^{6d}}e^{-2\pi i[\Phi(2s-y_2,\eta_1)-\Phi(y_2,-\eta_2)-x\eta_1-\frac{t}{2}\eta_1-x\eta_2+\frac{t}{2}\eta_2-(2s-y_2)z+y_2z+\xi t]}\\
		& \qquad\quad \times \tau(2s-y_2,\eta_1)\overline{\tau(y_2,-\eta_2)}W(f,g)(s,z)dzdsdy_2d\eta_1d\eta_2dt.
	\end{align*}
	Next, observing that
\begin{align*}
		\int_{\rd} e^{-2\pi i(\frac{\eta_2}{2}-\frac{\eta_1}{2}+\xi)t}dt& =\int_{\rd} e^{-2\pi i(\eta_2-\eta_1+2\xi)\frac{t}{2}}dt  =2^{d}\int_{\rd} e^{-2\pi i(\eta_2-\eta_1+2\xi)t'}dt',\\
		&=2^d \int_{\rd} e^{-2\pi i\eta_2 t'}M_{\eta_1-2\xi}1(\eta_2)dt'=2^d T_{\eta_1-2\xi}\hat{1}(\eta_2)\\
		&=2^d T_{\eta_1-2\xi}\delta(\eta_2).
	\end{align*}
we reckon
	\begin{align*}
		W(T_{II}f,T_{II}g)(x,\xi)&=2^{2d}\int_{\bR^{4d}}e^{-2\pi i[\Phi(2s-y_2,\eta_1)-\Phi(y_2,2\xi-\eta_1)+2(\xi-\eta_1)x+2(y_2-s)z]}\\
		&\qquad\quad\times \, \tau(2s-y_2,\eta_1)\overline{\tau(y_2,2\xi-\eta_1)}W(f,g)(s,z)dzdsdy_2d\eta_1\\
		&=\int_{\rdd}k_{II}(x,\xi,s,z)W(f,g)(s,z)dsdz,
	\end{align*}
	where
	\begin{align*}
		k_{II}(x,\xi,s,z)&=2^{2d}\int_{\rdd}e^{-2\pi i[\Phi(2s-y_2,\eta_1)-\Phi(y_2,2\xi-\eta_1)+2(\xi-\eta_1)x+2(y_2-s)z]}\\
		&\qquad\quad\times \, \tau(2s-y_2,\eta_1)\overline{\tau(y_2,2\xi-\eta_1)}
		dy_2d\eta_1.
	\end{align*}
Next, we make the change of variables $s-y_2=r/2$ and $\xi-\eta_1=-t/2$ so that 
	\begin{align*}
	k_{II}(x,\xi,s,z)&=\int_{\rdd}e^{-2\pi i[\Phi(s+\frac r2,\xi+\frac t2)-\Phi(s-\frac r2,\xi-\frac t2)]}  e^{2\pi i(tx+rz)}\\
	&\qquad\times \, \tau(s+\frac r2,\xi+\frac t2)\overline{\tau(s-\frac r2,\xi-\frac t2)}
dt dr,
\end{align*}
which is \eqref{KII}.
\end{proof}

\begin{theorem}\label{stimeII}
	Consider $T_{II}$ the FIO of type II in (\ref{FIOII}). Fix $N\in\mathbb{N}$, $N>d$, and assume that the symbol $\tau\in \Gamma^m(\rdd)$, with $m<-2(d+N)$. Let $k_{II}$ be the associated Wigner kernel, given by \eqref{KII}. Then,
	\begin{equation}\label{50}
		|k_{II}(x,\xi,y,\eta)|\lesssim \frac{\la (y,\xi) \ra^{2N+m}}{\la (y,\eta)-\chi(x,\xi) \ra^{2N}}, \qquad x,\xi,y,\eta\in \rd.
	\end{equation}
\end{theorem}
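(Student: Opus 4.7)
The plan is to mirror the argument used for Theorem~\ref{T4.1}: Taylor-expand the phase in the integrand of \eqref{KII}, isolate an oscillatory factor linear in the integration variables $(t,r)$, then integrate by parts to extract the desired decay. The ``good'' base point for the Taylor expansion is $(y,\xi)$, because that is the center of the two symbol arguments $(y\pm r/2,\xi\pm t/2)$; this is the reason why the numerator of \eqref{50} carries the weight $\langle(y,\xi)\rangle^{2N+m}$ instead of $\langle(x,\eta)\rangle^{2N+m}$ as in the type~I case.

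First I would write, with remainders $\Phi_2,\widetilde{\Phi}_2$ as in \eqref{eq:c11}--\eqref{eq:c12},
\begin{equation*}
\Phi\!\left(y\pm\tfrac{r}{2},\xi\pm\tfrac{t}{2}\right)=\Phi(y,\xi)\pm\tfrac{r}{2}\Phi_y(y,\xi)\pm\tfrac{t}{2}\Phi_\xi(y,\xi)+R^{\pm}(y,\xi,t,r),
\end{equation*}
so that the total phase appearing in \eqref{KII} becomes
\begin{equation*}
2\pi\bigl[t(x-\Phi_\xi(y,\xi))+r(\eta-\Phi_y(y,\xi))\bigr]-2\pi\bigl(R^+-R^-\bigr).
\end{equation*}
Absorbing the remainder into the amplitude via
$\tilde{\tau}(y,\xi,t,r):=e^{-2\pi i(R^+-R^-)}\tau(y+\tfrac{r}{2},\xi+\tfrac{t}{2})\overline{\tau(y-\tfrac{r}{2},\xi-\tfrac{t}{2})}$,
one gets an oscillatory integral with linear phase in $(t,r)$. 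Integration by parts via the operator $(1-\Delta_{(t,r)})^N$ produces the factor $\langle 2\pi(x-\Phi_\xi(y,\xi),\eta-\Phi_y(y,\xi))\rangle^{-2N}$ in front.

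Next I would estimate $|(1-\Delta_{(t,r)})^N\tilde\tau(y,\xi,t,r)|$ exactly as in the proof of Theorem~\ref{T4.1}. The derivatives of $R^+-R^-$ contribute polynomials whose coefficients are uniformly bounded by condition~\eqref{phasedecay}, hence at most a factor $\langle(t/2,r/2)\rangle^{|\alpha|}$; the Shubin estimate $\tau\in\Gamma^m(\rdd)$ yields
$|\partial^\beta\tau(y\pm r/2,\xi\pm t/2)|\lesssim\langle(y\pm r/2,\xi\pm t/2)\rangle^{m-|\beta|}$.
Combining these and summing over multi-indices with $|\alpha|+|\beta|+|\gamma|\le 2N$ produces the pointwise bound
\begin{equation*}
|(1-\Delta_{(t,r)})^N\tilde\tau(y,\xi,t,r)|\lesssim \langle(y+\tfrac{r}{2},\xi+\tfrac{t}{2})\rangle^{2N+m}\,\langle(y-\tfrac{r}{2},\xi-\tfrac{t}{2})\rangle^{2N+m}.
\end{equation*}
Since $2N+m<-2d$, the weight convolution lemma ($v_s\ast v_s\lesssim v_s$ for $s<-2d$) applied after the substitution $u'=(r/2,t/2)$ yields
$\int_{\rdd}\langle(y,\xi)-u'\rangle^{2N+m}\langle(y,\xi)+u'\rangle^{2N+m}\,du'\lesssim \langle(y,\xi)\rangle^{2N+m}$,
which is the numerator in \eqref{50}.

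The last step is to identify the denominator. Using the definition of $\chi$ in \eqref{cantra}--\eqref{chi} applied to $\Phi$ (reading $y$ as the first, $\xi$ as the second argument), the system
\begin{equation*}
x=\Phi_\xi(y,\xi),\qquad \eta=\Phi_y(y,\xi)
\end{equation*}
is exactly the one that expresses $(y,\eta)=\chi(x,\xi)$. Combining conditions \emph{A2}, \emph{A5} and \emph{A6} one shows that the map $(y,\eta)\mapsto(x-\Phi_\xi(y,\xi),\eta-\Phi_y(y,\xi))$ is a bi-Lipschitz perturbation of $(y,\eta)\mapsto(y,\eta)-\chi(x,\xi)$ with uniform constants, so
$\langle(x-\Phi_\xi(y,\xi),\eta-\Phi_y(y,\xi))\rangle\asymp \langle(y,\eta)-\chi(x,\xi)\rangle$.
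Plugging this equivalence into the bound obtained above yields \eqref{50}.

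The main obstacle I foresee is precisely this last geometric identification: the integration by parts produces decay with respect to the ``phase-differential'' vector, and one has to justify rigorously, uniformly in all variables, that this is equivalent to decay with respect to $(y,\eta)-\chi(x,\xi)$. This amounts to checking that the Jacobian of the map $(y,\eta)\mapsto(x-\Phi_\xi(y,\xi),\eta-\Phi_y(y,\xi))$ (which has the block form $\bigl(\begin{smallmatrix}-\Phi_{\xi y}&0\\-\Phi_{yy}&I\end{smallmatrix}\bigr)$) is uniformly invertible thanks to \eqref{detcond}, and that its entries are uniformly bounded thanks to \eqref{phasedecay}. All the other steps are direct analogues of those in Theorem~\ref{T4.1}, so they should be essentially routine.
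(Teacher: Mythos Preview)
Your proposal is correct and follows essentially the same route as the paper's proof: Taylor expand the phase of \eqref{KII} around $(y,\xi)$, absorb the second-order remainder into an amplitude $\tilde\tau$, integrate by parts with $(1-\Delta_{(t,r)})^N$, and then bound the resulting integral exactly as in Theorem~\ref{T4.1} using the weight convolution property. The only difference is that the paper records the final denominator identification $\langle(\eta-\Phi_y(y,\xi),\,x-\Phi_\xi(y,\xi))\rangle\asymp\langle(y,\eta)-\chi(x,\xi)\rangle$ as a single ``$\asymp$'' (parallel to the last line of the proof of Theorem~\ref{T4.1}), whereas you spell out the bi-Lipschitz argument behind it; your Jacobian computation is the right justification and your concern about this step is well placed but not an obstacle.
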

\begin{proof}
	We follow the pattern of the proof of Theorem \ref{T4.1}. By (\ref{KII}) and using the Taylor expansions in \eqref{eq:c11} and \eqref{eq:c12} we reckon
	\begin{equation}\label{expkII}
	\begin{split}
		k_{II}(x,\xi,y,\eta)&=\int_{\rdd}e^{2\pi i[r\cdot(\eta-\Phi_y(y,\xi))+t\cdot(x-\Phi_\xi(y,\xi))]}\tilde\tau(y,\xi,r,t)drdt,
	\end{split}
	\end{equation}
	where 
	\[
		\tilde\tau(y,\xi,r,t)=e^{-2\pi i[\Phi_2-\tilde\Phi_2](y,\xi,r,t)}\times\tau(y+\frac{r}{2},\xi+\frac{t}{2})\overline{\tau(y-\frac{r}{2},\xi-\frac{t}{2})},
	\]
	and the reminders are given by:
	\[
		\Phi_2(y,\xi,r,t)=\sum_{|\alpha|=2}\int_0^1(1-\tau)\partial^\alpha\Phi((y,\xi)+\tau(r,t)/2)d\tau\frac{(r,t)^\alpha}{2^3\alpha!}
	\]
	and
	\[
		\tilde\Phi_2(y,\xi,r,t)=\sum_{|\alpha|=2}\int_0^1(1-\tau)\partial^\alpha\Phi((y,\xi)-\tau(r,t)/2)d\tau\frac{(r,t)^\alpha}{2^3\alpha!}.
	\]
	Again, for $N\in\mathbb{N}$ and setting $u=(r,t)\in\rdd$, we have:
	\begin{align*}
		(1-\Delta_u)^N&e^{2\pi i(\eta-\Phi_y(y,\xi),x-\Phi_\xi(y,\xi))\cdot(r,t)}\\
		&=\la 2\pi (\eta-\Phi_y(y,\xi),x-\Phi_\xi(y,\xi)) \ra^{2N}e^{2\pi i(\eta-\Phi_y(y,\xi),x-\Phi_\xi(y,\xi))\cdot(r,t)}.
	\end{align*}
	Integrating by parts in (\ref{expkII}), we get:
	\begin{align*}
		k_{II}(x,\xi,y,\eta)&=\frac{1}{\la 2\pi (\eta-\Phi_y(y,\xi),x-\Phi_\xi(y,\xi)) \ra^{2N}}\int_{\rdd}e^{2\pi i(\eta-\Phi_y(y,\xi),x-\Phi_\xi(y,\xi))\cdot(r,t)}\\
		&\qquad\times (1-\Delta_u)^N\tilde\tau(y,\xi,r,t)drdt.
	\end{align*}
	The same estimates of Theorem \ref{T4.1} yield to:
	\begin{align*}
		|k_{II}(x,\xi,y,\eta)|&\leq\frac{1}{\la 2\pi (\eta-\Phi_y(y,\xi),x-\Phi_\xi(y,\xi)) \ra^{2N}}\int_{\rdd}|(1-\Delta_u)^N\tilde\tau(y,\xi,r,t)|drdt\\
		&\asymp \frac{\la (y,\xi) \ra^{2N+m}}{\la(y,\eta)-\chi(x,\xi))  \ra^{2N}}.
	\end{align*}
\end{proof}

From \cite{CRPartI2022} we deduce
\begin{corollary}\label{cor42II}
	Under the assumptions of Theorem \ref{stimeII},   the estimate \eqref{nucleoFIO}
	holds true, hence $T_{II}\in FIO(\chi,N)$.
\end{corollary}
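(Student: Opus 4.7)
The plan is as follows. The proof reduces entirely to a bookkeeping step built on top of Theorem \ref{stimeII}, mirroring the argument for Corollary \ref{cor42}. The key numerical observation is that the hypothesis $m<-2(d+N)$ forces $2N+m<-2d<0$, so the weight $\langle (y,\xi)\rangle^{2N+m}$ appearing in the numerator of \eqref{50} is uniformly bounded by $1$ on $\rdd$. Plugging this into \eqref{50} immediately gives
\[
|k_{II}(x,\xi,y,\eta)|\;\lesssim\;\frac{1}{\la (y,\eta)-\chi(x,\xi)\ra^{2N}}.
\]

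To match the precise form \eqref{nucleoFIO} of Definition \ref{C6T1.1}, I would then invoke the bi-Lipschitz character of $\chi$. Conditions A4--A6 in Definition \ref{def2.1} (in particular \eqref{chistima} and \eqref{detcond2}) yield uniform bounds on the first derivatives of both $\chi$ and $\chi^{-1}$, whence $|(y,\eta)-\chi(x,\xi)|\asymp|(x,\xi)-\chi^{-1}(y,\eta)|$ and consequently $\la (y,\eta)-\chi(x,\xi)\ra\asymp \la (x,\xi)-\chi^{-1}(y,\eta)\ra$. Identifying $z=(x,\xi)$ and $w=(y,\eta)$ as dictated by Theorem \ref{thm22}, this converts the previous display into $|k_{II}(z,w)|\lesssim \la z-\chi^{-1}(w)\ra^{-2N}$, which is precisely the decay required to conclude $T_{II}\in FIO(\chi,N)$ (with the convention, already used in \cite{CRPartI2022} and consistent with Theorem \ref{adjoint} applied to $T_{II}=T_I^*$, for the canonical transformation attached to a FIO of type II).

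No genuine obstacle arises, since all the analytic heavy lifting has been absorbed into Theorem \ref{stimeII}, whose proof in turn closely follows that of Theorem \ref{T4.1}. The only points that deserve any care are (i) correctly identifying which pair of phase-space variables plays the role of the output $z$ and which the input $w$ in the Wigner kernel representation of $T_{II}$ produced in Theorem \ref{thm22}, and (ii) handling the $\chi\leftrightarrow\chi^{-1}$ exchange intrinsic to passing from FIOs of type I to their type II counterparts. Both are resolved by the elementary bi-Lipschitz equivalence above.
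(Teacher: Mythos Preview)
Your proposal is correct and follows essentially the same approach as the paper: the paper's own proof is a single sentence noting that $2N+m<0$ forces $\la (y,\xi)\ra^{2N+m}\leq 1$, which is exactly your first step. Your additional remarks on the bi-Lipschitz equivalence $\la (y,\eta)-\chi(x,\xi)\ra\asymp\la (x,\xi)-\chi^{-1}(y,\eta)\ra$ and the $\chi\leftrightarrow\chi^{-1}$ convention for type~II operators are a useful clarification that the paper leaves implicit (it simply cites \cite{CRPartI2022} before the corollary and does not spell this out).
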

\begin{proof}
It follows from \eqref{50}, since $2N+m<0$ so that $\la (y,\xi)\ra^{2N+m}\leq 1$, for every $y,\xi\in\rd$.
\end{proof}

\section*{Acknowledgements}
The first three authors have been supported by the Gruppo Nazionale per l’Analisi Matematica, la Probabilità e le loro Applicazioni (GNAMPA) of the Istituto Nazionale di Alta Matematica (INdAM).

\end{document}